\newtheorem{thm}{Theorem}[section]
\newtheorem{lem}[thm]{Lemma}
\newtheorem{cor}[thm]{Corollary}
\newtheorem{prop}[thm]{Proposition}
\newtheorem{conj}{Conjecture}
\theoremstyle{remark} 
\newtheorem{rem}[thm]{Remark}
\newtheorem{ex}[thm]{Example}
\def\Auto{\mathit{Aut}}
\def\bRep{\mathit{Rep}}
\def\gp{\mathfrak p}
\def\gq{\mathfrak q}
\def\gg{\mathfrak g}
\def\gr{\mathfrak r}
\def\gP{\mathfrak P}
\def\gY{\mathfrak Y}
\def\gM{\mathfrak M}
\def\gE{\mathfrak E}
\def\R{\mathcal R}
\def\M{\mathcal M}
\def\8{\mathcal C}
\def\O{\mathcal O}
\def\BT{\mathcal{BT}}
\def\rk{\operatorname{rk}}
\def\id{\operatorname{id}}
\def\Mat{\operatorname{Mat}}
\def\Hom{\operatorname{Hom}}
\def\End{\operatorname{End}}
\def\Gal{\operatorname{Gal}}
\def\Lie{\operatorname{Lie}}
\def\Spec{\operatorname{Spec}}
\def\sy{\operatorname{sym}}
\def\sw{\operatorname{skew}}
\def\sd{\operatorname{std}}
\def\Res{\operatorname{Res}}
\def\SO{\operatorname{SO}}
\def\GO{\operatorname{O}}
\def\OG{\operatorname{GO}}
\def\GL{\operatorname{GL}}
\def\SL{\operatorname{SL}}
\def\PGL{\operatorname{PGL}}
\def\tr{\operatorname{tr}}
\def\ch{\operatorname{char}}
\def\fl{\operatorname{Fil}}
\def\ybar{\overline y}
\def\xbar{\overline x}
\def\Xbar{\overline X}
\def\Fbar{\overline F}
\def\gm{\overline{\mathfrak M}}
\def\Mbar{\overline{\mathcal M}}
\def\fbar{\overline{\mathbb F}}
\def\qbar{\overline{\mathbb Q}}
\def\Phib{\overline{\Phi}}
\def\bark{\overline{\kappa}}
\def\z{\mathbb Z}
\def\a{\mathbb A}
\def\n{\mathbb N}
\def\C{\mathbb C}
\def\f{\mathbb F}
\def\q{\mathbb Q}
\def\r{\mathbb R}
\def\g{\mathbb G}
\def\invlim{\mathop{\vtop{\hbox{\rm lim}\vskip-8pt     
        \hbox{\hskip1pt$\scriptstyle\longleftarrow$}\vskip-1pt}}}
\title{Further counterexamples to Zarhin's conjecture about microweights}
\author{Oliver B\"ultel}
\thanks{14C30, 14G17, 14G35, 14K10, 14L05 (MSC2020), oliver.bultel@bogazici.edu.tr}
\begin{document}
\maketitle

\address{\em Department of Mathematics, Bo\u{g}azi\c{c}i University, 34342 Bebek, \.{I}stanbul, Turkey}

\address{\begin{center}\em Feza G\"ursey Center for Physics and Mathematics,\\Bo\u{g}azi\c{c}i University, 34684 Kandilli, \.{I}stanbul, Turkey\end{center}}

\begin{abstract}
We present two new families of Abelian varieties which contradict Zarhin's conjecture about microweights in positive characteristics. For each 
of these examples we determine the dimension and the Newton-slopes of the ghost Abelian variety in the sense of Cadoret and Tamagawa.
\end{abstract}

\tableofcontents

\section{Introduction}

Abelian varieties over finitely generated fields are amongst the most intensively and frequently studied classical issues in algebraic geometry. Let $\Fbar$ be a separable 
closure of some finitely generated extension $F$ of a prime field of any characteristic, and let $\ell$ be a prime which is invertible in $F$. For an Abelian $g$-fold 
$Y\rightarrow\Spec F$ the $\ell$-adic Tate module $T_\ell Y_{\Fbar}:=\invlim_n Y(\Fbar)[\ell^n]$ (resp. $V_\ell Y_{\Fbar}:=\q\otimes T_\ell Y_{\Fbar}$) is a free $\z_\ell$-module 
(resp. $\q_\ell$-vector space) of rank $2g$. Its significance stems from an extremely interesting $\Gal(\Fbar/F)$-action thereon, and one defines the $\ell$-adic arithmetic 
monodromy group of $Y/F$ to be the smallest $\q_\ell$-algebraic subgroup $G_{Y,\ell}\subset\GL(V_\ell/\q_\ell)$ containing the image of the monodromy representation:
\begin{equation}
\label{monod4}
\rho_{Y,\ell}:\Gal(\Fbar/F)\rightarrow\GL(V_\ell Y_{\Fbar}/\q_\ell)
\end{equation}
The following classical results of Yuri Zarhin are indispensable for further insights into $G_{Y,\ell}$ (cf. \cite{mori}):
\begin{itemize}
\item
$\q_\ell\otimes\End(Y)\cong\End_{G_{Y,\ell}}(V_\ell Y_{\Fbar})$
\item
The algebraic group $G_{Y,\ell}$ is reductive.
\end{itemize}
In the special case $\ch(F)=0$ we may choose an inclusion $\iota$ of $\Fbar$ into $\C$, giving rise to the exponential
$\exp_{Y_\C}:\Lie Y_\C\twoheadrightarrow Y(\C)$ and the period lattice $TY_\C=\ker(\exp_{Y_\C})$ with Hodge decomposition
\begin{equation}
\label{character01}
\C\otimes TY_\C\cong\Lie Y_\C\oplus\check\Lie Y_\C^t,
\end{equation}
where $Y^t$ is the dual Abelian variety. One defines the Mumford-Tate group of $Y_\C$ to be the smallest $\q$-algebraic 
subgroup $H_Y\subset\GL(TY_\C/\z)_\q$ whose scalar extension to $\C$ contains the unique cocharacter 
$$\mu_{Y_\C}:\g_{m,\C}\rightarrow\GL(TY_\C/\z)_\C$$
which renders $\Lie Y_\C$ and $\check\Lie Y_\C^t$ the subspaces of weights $1$ and $0$. It is well-known that the structure of the connected 
group $H_Y$ and its tautological representation $\q\otimes TY_\C$ is severely limited by the existence of a cocharacter with only two weights:
\begin{itemize}
\item[(i)]
The non-abelian simple components of $H_{Y,\qbar}$ are of type $A_n$, $B_n$, $C_n$ or $D_n$.
\item[(ii)]
The irreducible summands of $\qbar\otimes TY_\C$ are (tensor products of) minuscule representations of $H_{Y,\qbar}$.
\end{itemize}
Please see to \cite{deligne3} and \cite{pink2} for more specific results. A natural conjecture, considered and studied by Zarhin, says that (i) and (ii) pertain to hold for the scalar extension to $\qbar_\ell$ 
of the neutral component $G_{Y,\ell}^\circ$ of the subgroup $G_{Y,\ell}\subset\GL(V_\ell Y_{\Fbar}/\q_\ell)$ for any Abelian variety $Y$ over any finitely generated field $F$ of any characteristic different 
from $\ell$ (cf. \cite[Subsection 0.4]{zarhin}). In characteristic $0$ this is motivated by the conjectural equality $G_{Y,\ell}^\circ=H_{Y,\q_\ell}$ (i.e. the Mumford-Tate conjecture). In fact the Zarhin 
conjecture holds in characteristic $0$ by work of Richard Pink (cf. \cite[Corollary 5.11]{pink1}) although the Mumford-Tate conjecture seems to be open, but please see to \cite{deligne2} for the inclusion 
$G_{Y,\ell}^\circ\subset H_{Y,\q_\ell}$. The focus of this note does lie on the case $\ch(F)\notin\{0,2\}$ and its purpose is to construct and to study certain examples for which the derived subgroup 
of the neutral component $G_{Y,\ell}^{der}:=[G_{Y,\ell}^\circ,G_{Y,\ell}^\circ]$ is a non-trivial group of adjoint type. This strongly contradicts Zarhin's conjecture, because the only minuscule representation 
of such groups is the trivial one, given that minuscule representations are already completely determined by their restriction to the center (cf. \cite[Chapter III, Section 13, Exercise 13]{humphreys}).

\begin{thm}
\label{monod7}
For every $p>2$ there exists an Abelian $6$-fold $Y$ over a finitely generated extension of $\mathbb F_p$ such that $G_{Y,\ell}^\circ$ is a $\q_\ell$-form 
of $\g_m\times\SO(3)^2$ for every prime $\ell\neq p$, where the tautological representation of $G_{Y,\ell,\qbar_\ell}^\circ$ can be written as the direct sum 
of two copies of the two projections $\g_m\times\SO(3)^2\rightarrow\g_m\times\SO(3)\rightarrow\GL(3)$ and identifies $\g_m$ with the group of homotheties.
\end{thm}

Our motivation for theorem \ref{monod7} was spurred by work of Oort and van der Put, who applied the Mumford-Faltings-Chai construction for a production of principally polarized Abelian varieties $Y$ of any dimension 
$g\geq5$ over $\f_{p^2}((t))$ such that $\End^0(Y_{\overline{\f_{p^2}((t))}})$ is a quaternion algebra over $\q$, cf. \cite[Theorem(1.1),Example(1.5.1)]{oort1}. Notice that a choice of closed immersion of $Y$ into a suitable 
projective space endows $Y$ with a descent to the subfield of $\f_{p^2}((t))$ which is generated by the coefficients of all equations needed to define it as a subvariety. By doing so, we prove in subsection \ref{fractal7} that 
the cases $g\equiv1\pmod2$ of Oort and van der Put's examples contradict Zarhin's conjecture too (proposition \ref{fractal3}). Section \ref{2nd} reviews a $G_2$-case of a rather general construction in \cite{habil}, 
of Abelian varieties $Y$ with prescribed "$\gr$-adic" monodromy groups and representations, where $\gr\nmid p$ is a prime of an auxiliary CM field acting on $Y$, please see to theorem \ref{G2} for details.\\ 
More recently Anna Cadoret and Akio Tamagawa introduced the ghost of an Abelian variety $Y$ over a finitely generated field $F\supset\f_p$: This is an Abelian variety $\gY$ over an unspecified finite field 
$\f_q\supset\f_p$ with the nice property that the $\Gal(\fbar_p/\f_q)$-representations $V_\ell\gY_{\fbar_p}$ agree with the $\q_\ell$-spaces of $T_\ell$-invariants of the Tate vector spaces $V_\ell Y_{\Fbar}$ for choices 
of $\ell\neq p$ and maximal tori $T_\ell\subset G_{Y,\ell}^{der}$, please see to \cite[Section 6]{cadoret} for details. If $Y$ happens to satisfy Zarhin's conjecture (e.g. in the ordinary case, by \cite[Corollary 4.3.1]{zarhin} 
or \cite[Corollary 6.2]{pink2}), then the Galois representations $V_\ell\gY_{\fbar_p}$ agree with the $\q_\ell$-spaces of $G_{Y,\ell}^{der}$-invariants of the Tate vector spaces $V_\ell Y_{\Fbar}$ for every $\ell$. 
Therefore, the ghost of $Y$ is a good measure for the failure of Zarhin's conjecture. Following a suggestion of Prof. Cadoret, we determine the dimensions and the formal isogeny types of the ghosts of our aforementioned 
$G_2$-examples. At last, please see to \cite[Appendix A]{cadoret} for the discussion of yet another $G_2$-example, which appeared in \cite[Chapter 9]{katz}, (for purposes different from studying Zarhin's conjecture).\\
I thank Prof. Cadoret for much good advice and for pointing out the reference \cite{pink1}. I thank Prof. Ikeda and Prof. Goldring for many conversations on the conjectures \ref{O3} and \ref{ShG2} of section \ref{converse} and 
Claudia Glanemann for encouragement. We hope to rekindle interest and foster awareness about further peculiarities of the case $\ch(F)>0$, as pointed out in \cite{habil} and \cite[Question 2A]{oort2}. 

\section{First Example}

We would like to fix a real quadratic number field $K$ and a quaternion algebra $D$ over $K$ such that $p$ is inert and unramified in $K$ and $D$ splits at all but the two 
archimedean places of $K$. In this section we study polarized Abelian $6$-folds $(Y,\lambda)$ over fields $k$ of characteristic $p>2$, such that there exists a Rosati invariant inclusion:
\begin{equation}
\label{monod8}
\iota:D\hookrightarrow\End^0(Y)
\end{equation}
Recall that the Rosati involution on the endomorphism algebra of a polarized Abelian variety is positive (see \cite[Section 21]{mumford}). It follows that the Rosati involution agrees with the main involution of $D$, 
as a nebeninvolution would stabilize a CM subfield of $D$, but the identity is not a positive involution on such fields. In this section we use Zarhin's theorem to deduce theorem \ref{monod7} from scenarios, 
where equality holds in \eqref{monod8} (i.e. of type III(2) in the terminology of \cite[Paragraph 7.2]{oort3}), and we construct the latter by deforming carefully chosen supersingular cases of \eqref{monod8}, 
followed by a descent to an unspecified finitely generated ground ring contained in $\fbar_p[[t]]$. Throughout most of this section we restrict our attention to triples $(Y,\lambda,\iota)$ for which $\iota^{-1}(\End(Y))$ is 
equal to a fixed maximal order, say $\O_D\subset D$. We write $\O_K$ for the ring of integers of $K$ and $*$ for the main involution on $D$, which preserves $\O_D$ because $*+\id_D$ is the reduced trace of $D$.

\subsection{Some Morita equivalences}
\label{morita1}

Let $\O_D\supset\O_K$ be as above and fix a commutative ring $A$. We let $\O_K-\sy(A)$ be the groupoid consisting of triples $(T,M,\phi)$ where $T$ is an $A$-module, $M$ is an 
$A\otimes\O_K$-module, and $\phi:M\times M\rightarrow T$ is an $A$-bilinear map of $A$-modules satisfying $\phi(x,y)=\phi(y,x)$ and $\phi(\alpha x,y)=\phi(x,\alpha y)$ for all $\alpha\in\O_K$ 
and elements $x$ and $y$ of $M$. We let $\O_D-\sw(A)$ be the groupoid consisting of triples $(T,N,\psi)$ where $T$ is an $A$-module, $N$ is an $A\otimes\O_D$-module, and 
$\psi:N\times N\rightarrow T$ is an $A$-bilinear map of $A$-modules satisfying $-\psi(x,y)=\psi(y,x)$ and $\psi(\alpha x,y)=\psi(x,\alpha^*y)$ for all $\alpha\in\O_D$ and elements $x$ and $y$ of $N$.

\begin{lem}
\label{monod5}
If there exists an isomorphism from the $A\otimes\O_K$-algebra $A\otimes\O_D$ to the $A\otimes\O_K$-algebra $\Mat(2\times2,A\otimes\O_K)$, then there exists an 
equivalence between the categories $\O_D-\sw(A)$ and $\O_K-\sy(A)$ which preserves their forgetful functors to the groupoid of $A$-modules given by $(T,.,..)\mapsto T$.
\end{lem}
\begin{proof}
Since $\O_D-\sw(A)$ depends only on $A\otimes\O_D$, it is enough to construct an equivalence $\O_K-\sy(A)\stackrel{\cong}{\rightarrow}\Mat(2\times2,\O_K)-\sw(A)$, 
which we define by $(M,\phi)\mapsto(N,\psi)$, where $N:=M\oplus M$ and $\psi((x_1,x_2),(y_1,y_2)):=\phi(x_1,y_2)-\phi(x_2,y_1)$.
\end{proof}

\begin{cor}
\label{monod3} 
Let $(T,M,\psi)$ be an object of $\O_D-\sw(Q)$, where $T$ (resp. $M$) is a one-dimensional (resp. finite-dimensional) vector space over a field $Q$ and $\psi$ is non-degenerate. 
Consider the algebraic $Q$-group defined by the $Q$-functor $H(A)$ of automorphisms of the triple $(A\otimes_QT,A\otimes_QM,\psi_A)$ when regarded as an object of $\O_D-\sw(A)$, 
where $\psi_A$ denotes the scalar extension of $\psi$ to a varying $Q$-algebra $A$. If $\ch(Q)\neq2$, then $H$ is a form of $\OG(\frac m2)\times_{\g_m}\OG(\frac m2)$, where 
$2\mid\dim_QM=:m$, $\OG(\frac m2)$ denotes the classical group of orthogonal similarities in $\frac m2$ variables and the map $\OG(\frac m2)\rightarrow\g_m$ is the multiplier character.
\end{cor}

Notice that our assumptions on $\O_D\supset\O_K$ imply the existence of $\z_\ell\otimes\O_K$-algebra isomorphisms $\z_\ell\otimes\O_D\cong\Mat(2\times2,\z_\ell\otimes\O_K)$ for all prime 
numbers $\ell$, including $\ell=p$. It follows that the proof of \ref{monod5} carries over to the following scenario, in which a superscripted "$t$" denotes the Serre-dual of an $\ell$-divisible group.

\begin{lem}
\label{monod6} 
For every scheme $S$ and every prime $\ell$ there exists an equivalence between the following groupoids:
\begin{itemize}
\item[(i)]
$\ell$-divisible groups $H/S$ with a homomorphism $-\psi^t=\psi:H\rightarrow H^t$ and an operation 
$\iota:\z_\ell\otimes\O_D\rightarrow\End(H)$ such that $\psi\circ\iota(\alpha^*)=\iota(\alpha)^t\circ\psi$ for every $\alpha\in\z_\ell\otimes\O_D$
\item[(ii)]
$\ell$-divisible groups $G/S$ with a homomorphism $\phi^t=\phi:G\rightarrow G^t$ and an operation 
$\kappa:\z_\ell\otimes\O_K\rightarrow\End(G)$ such that $\phi\circ\kappa(\alpha)=\kappa(\alpha)^t\circ\phi$ for every $\alpha\in\z_\ell\otimes\O_K$
\end{itemize}
If $(G,\phi,\kappa)$ corresponds to $(H,\psi,\iota)$ under the aforementioned equivalence, then: 
\begin{eqnarray}
\label{O1}
&H\cong G^{\oplus2}&\\
\label{O2}
&\ker(\psi)\cong\ker(\phi)^{\oplus2}&
\end{eqnarray}
\end{lem}
\begin{proof}
The homomorphism $\phi$ (resp. $\psi$) can be regarded as a biadditive morphism from $G\times_SG$ (resp. $H\times_SH$) to $\mu_{\ell^\infty}$, 
so that we can apply lemma \ref{monod5} with $A=\z_\ell$ and $T:=\mu_{\ell^\infty}(R)$ where $R$ runs through the category of $\O_S$-algebras.
\end{proof}

\subsection{On $\z/2\z$-graded symmetric Dieudonn\'e-modules of height $6$}
\label{morita3}

Let us fix a perfect field $k$ of characteristic $p>0$. Recall that a Dieudonn\'e module is a triple $(M,F,V)$ where $M$ is a finitely generated free module over the ring $W(k)$ of $p$-typical 
Witt vectors of infinite length and $F$ and $V$ are commuting additive endomorphisms on $M$ satisfying $^FaF(x)=F(ax)$, $aV(x)=V(^Fax)$ and $F(V(x))=px=F(V(x))$ for any 
$x\in M$ and $a\in W(k)$, where $a\mapsto{^Fa}$ denotes the automorphism of the ring $W(k)$ coming from the absolute Frobenius on $k$. The following facts are well-known:
\begin{itemize}
\item
For every Dieudonn\'e module $(M,F,V)$ the dual $\Hom_{W(k)}(M,W(k))$ inherits a $F$-linear (resp. $F^{-1}$-linear) 
endomorphism from $V$ (resp. from $F$), thus making it a Dieudonn\'e module, which is called the Cartier dual of $M$.
\item
Covariant Dieudonn\'e theory provides an equivalence between the category of $p$-divisible groups over $k$ and the category of Dieudonn\'e modules, 
which interchanges Serre duality and Cartier duality while the height of a $p$-divisible group agrees with the $W(k)$-rank of its Dieudonn\'e module.
\end{itemize}
For every pair of non-negative coprime integers $(a,b)$ there exists a smallest Dieudonn\'e module containing a non-zero element $x$ with $F^a(x)=V^b(x)$. Its 
$p$-divisible group $G_{a,b}$ is of height $a+b$, and the following holds over any algebraically closed ground field $k$: Every $p$-divisible group over $k$ is isogenous 
to a direct sum of isosimple ones and every isosimple $p$-divisible group over $k$ is isogenous to a $p$-divisible group of the form $G_{a,b}$, for a unique pair $(a,b)$.\\
By a $\z/r\z$-gradation on a Dieudonn\'e module $(M,F,V)$ we mean a $\z/r\z$-gradation on $M$ such that $F$ is homogeneous of degree $1$, so that $V$ is homogeneous of degree $-1$. 
If $k$ contains a field of cardinality $p^r$, then a $\z/r\z$-gradation on $M$ is nothing but the eigenspace decomposition of a $W(\f_{p^r})$-operation, where $\f_{p^r}:=\{a\in k\mid a^{p^r}=a\}$. 
In particular $p$-divisible groups $G$ over $k$ with additional structure as indicated in part (ii) of lemma \ref{monod6} correspond to $\z/2\z$-graded Dieudonn\'e modules $M=M_0\oplus M_1$ 
equipped with a non-degenerate symmetric bilinear pairing $M\times M\rightarrow W(k)$ satisfying the usual relation 
\begin{equation}
\label{monod9}
^F(x,V(y))=(F(x),y)
\end{equation}
together with $(x,y)=0$ for any $x\in M_0$ and $y\in M_1$ (provided that $k$ is an algebra over the ring $\O_K$ of that lemma). Let us write $K(k)$ for the field $W(k)[\frac1p]$ and let us say that 
two such $\z/2\z$-graded (symmetric) Dieudonn\'e-modules $M$ and $M'$ are isogenous if there exists a $K(k)$-linear isomorphism $\q\otimes M\stackrel{\cong}{\rightarrow}\q\otimes M'$ 
which preserves (the pairing and) the $\z/2\z$-gradation and commutes with $F$ and $V$. In the sequel we need a slight variant of the so-called skeleton. For a supersingular 
$\z/2\z$-graded Dieudonn\'e module $M=M_0\oplus M_1$ over an algebraically closed extension $k\supset\O_K/p\O_K$ we consider the $K(\f_{p^2})$-vector space:
$$S(M):=\{x\in\q\otimes M_0\mid F(x)=V(x)\}$$
The functor $M\mapsto S(M)$ provides an equivalence between the groupoid of finite dimensional $K(\f_{p^2})$-vector spaces and the groupoid of supersingular 
$\z/2\z$-graded Dieudonn\'e modules up to isogeny. Moreover, a non-degenerate symmetric pairing on $M$ in the previous sense restricts to a non-degenerate pairing 
$$S(M)\times S(M)\rightarrow K(\f_{p^2}),$$ 
as \eqref{monod9} implies $^{F^2}(x,y)=(\frac{F^2(x)}p,\frac{F^2(y)}p)=(x,y)$ for any $x$, $y\in S(M)$. Whence it follows that the groupoid of isometry classes of finite dimensional 
$K(\f_{p^2})$-vector spaces with non-degenerate symmetric pairing is equivalent to the groupoid of supersingular symmetric $\z/2\z$-graded Dieudonn\'e modules up to isogeny. This will 
prove useful for the construction of supersingular cases of \eqref{monod8} with a prescribed $\z/2\z$-graded symmetric Dieudonn\'e module (lemma \ref{morita2} and corollary \ref{morita5}). 

\begin{lem}
\label{morita6}
The formal isogeny type of a $p$-divisible group $G$ of height $6$ (resp. $4$) with additional structure as described in part (ii) of lemma 
\ref{monod6} is $G_{0,1}^2\oplus G_{1,1}\oplus G_{1,0}^2$ or $G_{1,1}^3$ (resp. $G_{0,1}^2\oplus G_{1,0}^2$ or $G_{1,1}^2$).
\end{lem}
\begin{proof} 
If a specimen $G$ of height $6$ did not contain any copies of $G_{1,1}$ it would be isogenous to a group of the form $\tilde G\oplus\tilde G^t$ where $\tilde G$ can be taken 
to be isotypic of height $3$ (in fact isogenous to $G_{0,1}^3$ or $G_{1,2}$). This is impossible because the $\z_p\otimes\O_K$ operation preserves the isotypic group $\tilde G$, 
but $3$ is odd, so that its Dieudonn\'e module cannot have a $\z/2\z$-grading. This reduces the lemma to the assertion for specimens of height $4$, which is easy.
\end{proof}

\begin{ex}
\label{morita4}
The following example of a supersingular $\z/2\z$-graded symmetric Dieudonn\'e module of rank $6$ will play a crucial role for our intended deformations of \eqref{monod8}. Consider a perfect field $k$ 
containing a field of cardinality $p^2$ and put $M_\sigma:=W(k)x_\sigma\oplus W(k)y_\sigma\oplus W(k)z_\sigma$ for $\sigma\in\{0,1\}$ and define $F(x_0)=x_1$, $F(y_0)=y_1$, $z_0=V(z_1)$, $F(x_1)=z_0$, 
$y_1=V(y_0)$, $z_1=V(x_0)$ and $(y_0,y_0)=(x_0,z_0)=(x_1,z_1)=1$ and $(y_1,y_1)=p$ and $(z_\sigma,z_\sigma)=(z_\sigma,y_\sigma)=(x_\sigma,y_\sigma)=(x_\sigma,x_\sigma)=0$ for $\sigma\in\{0,1\}$. 
\end{ex}

\subsection{Deformations}

We proceed to the construction of a deformation, which is based on the Serre-Tate theorem \cite{katz2}.

\begin{lem}
\label{morita2}
For $\O_D\supset\O_K$ as above and any algebraically closed extension $k\supset\O_K/p\O_K$, every finite dimensional $K(\f_{p^2})$-vector space with non-degenerate symmetric pairing arises as a skeleton from 
the supersingular $\z/2\z$-graded symmetric Dieudonn\'e-module associated to a supersingular polarized Abelian variety $(Y_0,\lambda_0)$ with Rosati invariant operation $\iota_0:\O_D\rightarrow\End(Y_0)$ over $k$.
\end{lem}
\begin{proof}
By taking products it is enough to look after triples $(Y_0,\lambda_0,\iota_0)$ where $Y_0$ is a surface. Let $\gE$ be a supersingular elliptic curve over $k$ and consider the functorial tensor 
product $\O_K\otimes\gE$, in the sense of \cite[Chapitre IX, Subsection 1.2]{mori}. According to \cite[Chapitre IX, Subsection 1.3]{mori}, we may choose a polarization on $\O_K\otimes\gE$ to 
arise from the product of the positive definite trace form on $\O_K$ with the unique principal polarization $\lambda_\gE$ of $\gE$. Once an isomorphism between $K\otimes\End(\gE)$ and $D$ 
is fixed, we obtain a Rosati invariant operation $D\rightarrow\End^0(\O_K\otimes\gE)$. Let $(Y_0,\lambda_0,\iota_0)$ be an $\O_D$-invariant member of the isogeny class of $\O_K\otimes\gE$ 
(N.B.: $\O_K\otimes\End(\gE)\ncong\O_D$). Notice that every totally positive $\alpha\in\O_K$ gives rise to another legitimate triple namely $(Y_0,\lambda_0\circ\alpha,\iota_0)$ whose skeleton 
is the one of $(Y_0,\lambda_0,\iota_0)$ multiplied with $\alpha$. We are done, since every element in $(\q_p\otimes K)^*/((\q_p\otimes K)^*)^2$ has a totally positive representative in $\O_K$.
\end{proof}

\begin{cor}
\label{morita5}
For $\O_D\supset\O_K$ as above and any algebraically closed extension $k\supset\O_K/p\O_K$, there exists a polarized Abelian $6$-fold $(Y_0,\lambda_0)$ with Rosati 
invariant operation $\iota_0:\O_D\rightarrow\End(Y_0)$ over $k$ whose associated $\z/2\z$-graded symmetric Dieudonn\'e module is the example described in \ref{morita4}.
\end{cor}
\begin{proof}
The example \ref{morita4} of a $\z/2\z$-graded symmetric Dieudonn\'e module  is clearly associated to some triple $(G,\phi,\kappa)$ consisting of a $p$-divisible group $G$ with additional
structure as indicated in part (ii) of lemma \ref{monod6}. Let $(H,\psi,\iota)$ be its pendant in the sense of the equivalence which is described there. By lemma \ref{morita2} there exists a triple 
$(Y_0,\lambda_0,\iota_0)$ together with a quasi-isogeny $Y_0[p^\infty]\stackrel{\eta}{\dashrightarrow}H$ which preserves the $\O_D$-action and the Weil-pairing. For sufficiently large $n$ one may 
consider the Abelian variety $Z_0:=Y_0/\ker(p^n\eta)$, which is naturally equipped with an $\O_D$-action. Looking at $\psi$ reveals that $p^{2n}\lambda_0$ descends to $Z_0$ and we are done.
\end{proof}

For every perfect field $k$ of characteristic $p>2$ there exists a convenient description of the category of $p$-divisible groups of some finite height $h$ and of some dimension $d\in\{0,\dots,h\}$ over $k[[t]]$ 
in terms of so-called Dieudonn\'e displays in the sense of \cite{zink3}. To this end one must introduce the subring $\hat W(k[[t]]):=W(k)\oplus\hat W(tk[[t]])$ of $W(k[[t]])$, where $\hat W(tk[[t]])$ consists 
of Witt vectors whose components $(x_0,x_1,\dots)$ satisfy $tk[[t]]\ni x_i\rightarrow0$ in the $t$-adic topology (cf. paragraph 2 of loc.cit.). A Dieudonn\'e display is a quadruple $(P,Q,F,V^{-1})$ where $P$ is a 
free $\hat W(k[[t]])$-module of rank $h$, $Q\subset P$ is a submodule such that $P/Q$ is a free $k[[t]]$-module of rank $d$, $V^{-1}:Q\rightarrow P$ is an $F$-linear homomorphism whose image generates 
$P$ as a $\hat W(k[[t]])$-module, and $F:P\rightarrow P$ is an $F$-linear homomorphism satisfying $V^{-1}({^Va}\cdot x)=aF(x)$ for all $a\in\hat W(k[[t]])$ and $x\in P$. Zink's results imply that the category 
of $k[[t]]$-Dieudonn\'e displays is equivalent to the category of $p$-divisible groups over $k[[t]]$. The Dieudonn\'e display of the Serre dual is given by $(P^*,Q^\perp,F,V^{-1})$, whose underlying modules are
$$\Hom_{\hat W(k[[t]])}(P,\hat W(k[[t]]))=:P^*\supset Q^\perp:=\{x\in P^*\mid\,\forall y\in Q:\,(x,y)\in\ker(w_0)\},$$
where $(x,y)$ stands for the perfect pairing between $P$ and its dual and $w_0$ is the projection onto $k[[t]]$ defined by $(x_0,x_1,\dots)\mapsto x_0$. The Dieudonn\'e display structure is set up, by requiring, that 
$F:P^*\rightarrow P^*$ (resp. $V^{-1}:Q^\perp\rightarrow P^*$) must satisfy $^F(x,y)=(F(x),V^{-1}(y))$ (resp. $^V(V^{-1}(x),V^{-1}(y))=(x,y)$) for any element $y\in Q$ and any $x\in P^*$ (resp. $x\in Q^\perp$).\\ 
Over more general $p$-adically complete ground rings, there exists a parallel theory of "nilpotent displays", which has the disadvantage 
that it is only well-behaved for the subcategory of formal $p$-divisible groups, please see to \cite{zink3} for details.

\begin{prop}
\label{NOW}
Let $\O_D\supset\O_K$ be as above and consider a polarized Abelian $6$-fold $(Y_0,\lambda_0)$ with Rosati invariant operation $\iota_0:\O_D\rightarrow\End(Y_0)$ over a perfect field 
extension $k\supset\O_K/p\O_K$. If its associated $\z/2\z$-graded symmetric Dieudonn\'e module agrees with the (supersingular) example \ref{morita4}, then there exists a deformation 
$(Y,\lambda,\iota)$ over $k[[t]]$ of $(Y_0,\lambda_0,\iota_0)$, such that the formal isogeny type of the generic fiber of $Y[p^\infty]$ is $G_{0,1}^4\oplus G_{1,1}^2\oplus G_{1,0}^4$.
\end{prop}
\begin{proof}
According to the Serre-Tate theorem and lemma it \ref{morita6}, it is enough to construct a non-supersingular deformation of the example \ref{morita4}. We start out 
from a change of base $W(k)\rightarrow\hat W(k[[t]])$, which yields a $\z/2\z$-graded symmetric Dieudonn\'e display $(P_0\oplus P_1,Q_0\oplus Q_1,F,V^{-1})$, where 
$P_\sigma=\hat W(k[[t]])\otimes_{W(k)}M_\sigma$ while $Q_\sigma$ is the kernel of $P_\sigma\rightarrow k[[t]]\otimes_kM_\sigma/VM_{\sigma+1}$ and similarly for $F$ and $V^{-1}$. Now let us write 
$(t,0,\dots)=:[t]\in\hat W(tk[[t]])$ for the Teichm\"uller lift of the element $t\in k[[t]]$ and consider the $\z/2\z$-graded $\hat W(k[[t]])$-linear automorphism $U$ on $P:=P_0\oplus P_1$ which is given by:
\begin{eqnarray*}
&&x_0\mapsto x_0\\
&&y_0\mapsto y_0+[t]x_0\\
&&z_0\mapsto z_0-[t]y_0-\frac{[t]^2}2x_0\\
&&x_1\mapsto x_1\\
&&y_1\mapsto y_1\\
&&z_1\mapsto z_1
\end{eqnarray*}
The $\z/2\z$-gradation and (the prolongation to $P_0\oplus P_1$ of) the symmetric pairing are preserved by $U$. To obtain a non-trivial deformation $(P_0\oplus P_1,Q_0\oplus Q_1,\tilde F,\tilde V^{-1})$ 
we precompose the maps $F$ and $V^{-1}$ with $U$ (N.B.: The composition of the linear map $U$ with the $F$-linear ones $F$ and $V^{-1}$ yield $F$-linear maps $\tilde F$ and $\tilde V^{-1}$). Due to
$\tilde F^2(x_1)=\tilde F(z_0-[t]y_0-\frac{[t]^2}2x_0)=pz_1-[t]^py_1-\frac{[t]^{2p}}2x_1$ and $pz_1-[t]^py_1\in Q_1$ we know that the generic fiber of our deformation is not supersingular. 
\end{proof}

\begin{lem}
\label{THEN}
For every $\O_D\supset\O_K$ as above and every polarized Abelian $6$-fold $(Y,\lambda)$ with Rosati invariant operation 
$\iota:\O_D\rightarrow\End(Y)$ over an algebraically closed field $k$ of characteristic $p>0$, one of the following assertions holds:
\begin{enumerate}
\item
$Y$ has complex multiplication.
\item
$\End(Y)$ is isomorphic to $\O_D$.
\end{enumerate}
Moreover, for $p\neq2$ and sufficiently large $k$ there exist triples $(Y,\lambda,\iota)$ for which the latter case holds.
\end{lem}
\begin{proof}
Recall that over an algebraically closed field of positive characteristic the property of having complex multiplication is equivalent to being isogenous to an Abelian variety definable over 
a finite field. So let us consider a triple $(Y,\lambda,\iota)$ which satisfies none of the two assertions above. Since supersingular Abelian varieties do have complex multiplication, we 
know that the formal isogeny type of $Y[p^\infty]$ must be $G_{0,1}^4\oplus G_{1,1}^2\oplus G_{1,0}^4$. We claim that $Y$ is (absolutely) simple. The occurrence of two different 
isogeny factors would lead straightforwardly to a decomposition $X\times_kZ$ each of whose factors is acted on by certain orders of $D$, where $\dim_kX=2$ and $\dim_kZ=4$. The 
formal isogeny type of $X$ (resp. $Z$) must be $G_{1,1}^2$ (resp. $G_{0,1}^4\oplus G_{1,0}^4$). We deduce that $X$ has complex multiplication. However the ordinary Abelian $4$-fold 
$Z$ with $D$-action has complex multiplication too, as one can see from the theory of canonical lifts and the analogous fact in characteristic $0$. Having ruled out the $X\times_kZ$-case 
we proceed to whether or not $Y$ could be a power of a single simple isogeny factor $Z$, and looking at the formal isogeny type leaves no possibility but $Y$ being isogenous to $Z\times_kZ$. 
As observed in \cite[Paragraph 7.2]{oort3}, the endomorphism algebra of an (absolutely) simple Abelian solid cannot be a definite quaternion algebra (i.e. of type III(1), when using the 
notation of loc.cit.). In fact all possible endomorphism types can be read off from the classification which is given there: So $\End^0(Z)$ is either equal to $\q$ or isomorphic to a totally 
real cubic field or a (skew) field extension of degree $2$, $6$ or $18$ over $\q$, provided that it possesses a positive involution of the second kind (i.e. of type I(1), I(3), IV(1,1), IV(3,1) or 
IV(1,3)). Again we would obtain a contradiction, since $D$ cannot be accommodated in $\Mat(2\times2,\End^0(Z))$ in the first three cases, while $Z$ is of CM-type in the last two cases.\\
Since we checked the simplicity of $Y$, we know that $B:=\End^0(Y)$ is a skew-field. Let $H$ be a maximal commutative sub-algebra of $D$, which is a quartic field containing $K$. Extending 
to a maximal commutative sub-algebra of $B$ yields a field of degree $4$ or $12$, of which the latter is ruled out by our assumption that $Y$ was not of CM type. 
We deduce that $H$ remains maximal commutative in $B$, so that the center of $B$ is contained in $K$. If $B$ was strictly bigger than $D$ its center would 
be nothing but $\q$, so that the former is just a form of $\Mat(4\times4,\q)$. Its invariant is contained in the $2$-torsion of the Brauer group of $\q$, given that the 
Rosati involution is an isomorphism between $B$ and $B^{opp}$. It follows that $B$ has the shape $\Mat(2\times2,B')$, which contradicts with $B$ being a skew-field.\\
The occurrence of the latter case (2) is granted at least over $\overline{\f_{p^2}((t))}$ by our proposition \ref{NOW}.
\end{proof}

The previous lemma implies the theorem \ref{monod7} of the introduction. Some of the results in this subsection were announced in 
my talk \cite{height3/2}, which was inspired by a problem of Oort on whether every positive rational number can be written in the form 
\begin{equation}
\label{fractal2}
\frac{2\dim Y}{[\End^0(Y):\q]}
\end{equation}
where $Y$ runs through all simple Abelian varieties over algebraically closed extensions of $\f_p$ (N.B.: If the characteristic was zero, then \eqref{fractal2} would be 
a natural number, namely the dimension of the rational period lattice $\q\otimes TY_\C$ as a vector space over the skew-field $\End^0(Y)$, cf. \cite[Question 2A]{oort2}). 
The lemma \ref{THEN} of this section gives a solution for the number $\frac32$ whereas \cite[Example(1.5.1)]{oort1} gives solutions for any $\frac g2$ with $5\leq g\in\n$.

\subsection{Conclusions}
\label{envelope}

Fix $\Fbar\supset F\supset\f_p$ as in the introduction and let $\O_D\supset\O_K$ be as above and let $(Y,\lambda)$ be a 
polarized Abelian $6$-fold over $F$ which is equipped with a Rosati invariant action $\iota:\O_D\rightarrow\End(Y)$. Let us write 
$$\psi_\ell:V_\ell Y_{\Fbar}\times V_\ell Y_{\Fbar}\rightarrow\q_\ell(1):=V_\ell\g_{m,\Fbar}$$ 
for the Weil-pairing and $H_\ell\subset\GL(V_\ell Y_{\Fbar}/\q_\ell)$ for the $\q_\ell$-subgroup defined by the $\q_\ell$-functor $H_\ell(A)$ of automorphisms 
of the triple $(A\otimes_{\q_\ell}\q_\ell(1),A\otimes_{\q_\ell}V_\ell Y_{\Fbar},\psi_{\ell,A})$ when regarded as an object of $\O_D-\sw(A)$, where $\psi_{\ell,A}$ 
denotes the scalar extension of $\psi_\ell$ to a $\q_\ell$-algebra $A$. In the case at hand \eqref{monod4} factors through a homomorphism
\begin{equation}
\label{monod1}
\Gal(\Fbar/F)\rightarrow H_\ell(\q_\ell)
\end{equation}
so that we may regard the $\ell$-adic arithmetic monodromy group of $Y/F$ as a subgroup $G_\ell\subset H_\ell$. Let $H_\ell^\circ$ be 
the neutral component of $H_\ell$. Corollary \ref{monod3} and the non-degeneracy of $\psi_\ell$ imply that $H_{\ell,\qbar_\ell}^\circ$ 
is isomorphic to $\g_m\times\SO(3)^2$, in particular the derived subgroup of $H_\ell^\circ$ is of adjoint type, since it is a form of $\SO(3)^2$.

\begin{lem}
\label{lorentz}
Let $\O_D\supset\O_K$ be as above and consider a polarized Abelian $6$-fold $(Y,\lambda)$ with Rosati invariant 
$\O_D$-operation $\iota$ over some finitely generated field $F$ of characteristic $p>0$. Then, one of the following assertions holds:
\begin{enumerate}
\item
The neutral component of the $\ell$-adic arithmetic monodromy group of $Y/F$ is a torus.
\item
The neutral component of the $\ell$-adic arithmetic monodromy group of $Y/F$ is equal to $H_\ell^\circ$ 
and the ghost (in the sense of \cite[Section 6]{cadoret}) of $Y$ is a supersingular Abelian surface.
\end{enumerate}
Moreover, for $p\neq2$ and sufficiently large $F$ there exist triples $(Y,\lambda,\iota)$ for which the latter case holds.
\end{lem}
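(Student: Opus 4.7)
The plan is to apply Lemma \ref{THEN} to the geometric generic fiber $Y_{\Fbar}$, obtaining the dichotomy: either $Y_{\Fbar}$ has complex multiplication, or $\End^0(Y_{\Fbar}) = D$. After replacing $F$ by a finite extension—which does not alter $G_\ell^\circ$—we may assume $\End(Y) = \End(Y_{\Fbar})$ and that $G_\ell$ is connected. In the CM case, $\End^0(Y_{\Fbar})$ contains a commutative semisimple $\q$-subalgebra $L$ of dimension $2\dim Y = 12$, and $V_\ell Y_{\Fbar}$ is free of rank one over $\q_\ell \otimes L$; by (i) of the introduction, $G_\ell^\circ$ centralises $\q_\ell \otimes L$ and therefore lies in its unit torus. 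This gives assertion (1).

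In the non-CM case we again invoke (i) to get $\End_{G_\ell^\circ}(V_\ell Y_{\Fbar}) = \q_\ell \otimes D$, equal to the commutant of $H_\ell^\circ$. Recall from subsection \ref{envelope} that $H_{\ell,\qbar_\ell}^\circ \cong \g_m \times \SO(3)^2$ and that $V_\ell Y_{\Fbar} \otimes \qbar_\ell$ decomposes as $(W_1 \otimes U_1) \oplus (W_2 \otimes U_2)$, with $W_i$ the two-dimensional blocks of $D \otimes \qbar_\ell$ and $U_i$ the respective standard three-dimensional $\SO(3)$-representations. To force $G_\ell^\circ = H_\ell^\circ$ I would enumerate the connected semisimple subgroups of $\SO(3) \times \SO(3)$ up to conjugacy—the trivial group, the two individual factors, the diagonal $\SO(3)$, and the whole group—and check that in each proper case the commutant of $G_\ell^\circ$ strictly exceeds $\q_\ell \otimes D$. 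A trivial or one-factor derived group leaves some $U_i$ with trivial semisimple action, inserting a spurious $\Mat(3,\qbar_\ell)$-summand; a diagonal $\SO(3)$ identifies $V_1$ and $V_2$ as $G_\ell^\circ$-modules (the only connected central torus of $H_\ell^\circ$ being the scalar $\g_m$, which cannot separate them), producing a nonzero $\Hom(V_1, V_2)^{G_\ell^\circ}$. Either outcome contradicts the commutant equality, so $G_\ell^{der} = H_\ell^{der}$; together with the cyclotomic-character surjection from (iii) this yields $G_\ell^\circ = H_\ell^\circ$.

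For the ghost, pick a maximal torus $T_\ell \subset G_\ell^{der} = H_\ell^{der}$, of rank two. The zero-weight line of the standard $\SO(3)$-representation is one-dimensional in each $U_i$, so $V_\ell \gY = (V_\ell Y_{\Fbar})^{T_\ell} \cong W_1 \oplus W_2$ has dimension four, forcing $\dim \gY = 2$. The inherited $D$-action is an $8$-dimensional non-commutative algebra in $\End^0(\gY)$, which is incompatible with any Newton polygon of an abelian surface other than the supersingular one; hence $\gY$ is a supersingular QM abelian surface. Equivalently, by the compatibility of the Cadoret-Tamagawa construction with Dieudonn\'e theory, $\gY[p^\infty]$ is identified with the $G_{1,1}^2$-summand of the generic formal isogeny type supplied by Proposition \ref{NOW}.

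Finally, case (2) occurs by taking $(Y,\lambda,\iota)$ from Proposition \ref{NOW} and descending to a finitely generated $k$-subalgebra of $k[[t]]$, as outlined at the start of this section: the twist by $U$ in the Dieudonn\'e display cuts the huge endomorphism algebra of the supersingular $Y_0$ down to exactly $D$ on the generic fiber—any further endomorphism would need to commute with $U$, which its explicit upper-triangular form precludes—so the non-CM alternative of Lemma \ref{THEN} applies. The main obstacle I foresee is the rigorous $\q_\ell$-descent of the semisimple-subgroup classification, tracking the $K_\ell/\q_\ell$-structure imposed by the Weil restriction $H_\ell^\circ = \Res_{K_\ell/\q_\ell} \GO$, together with the precise unwinding of the Cadoret-Tamagawa construction at the prime $p$ needed to pin down $\gY[p^\infty]$ as the $G_{1,1}^2$-summand.
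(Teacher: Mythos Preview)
Your determination of $G_\ell^\circ=H_\ell^\circ$ follows the paper's route: both reduce to checking that the projection $G_{\ell,\qbar_\ell}^\circ\to\SO(3)^2$ is onto by computing commutants for the proper maximal images. One small imprecision: in your one-factor case you infer a spurious $\Mat(3,\qbar_\ell)$-summand from the fact that $G_\ell^{der}$ acts trivially on some $U_i$, but the connected center of $G_\ell^\circ$ is not forced to be only the scalar $\g_m$ there---it may pick up a one-dimensional torus inside the idle $\SO(3)$-factor, in which case the commutant on that block is $\Mat(2,\qbar_\ell)^3$ rather than $\Mat(2,\qbar_\ell)\otimes\Mat(3,\qbar_\ell)$. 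Either way it strictly exceeds $\Mat(2,\qbar_\ell)$, so your contradiction survives; the paper records the outcomes as $\Mat(2,\qbar_\ell)^4$ and $\Mat(4,\qbar_\ell)$ in the three cases.

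The real gap is in the supersingularity of $\gY$. You assert that $D$ itself lands in $\End^0(\gY)$ as a $\q$-subalgebra, but the ghost is characterized only through the isomorphisms $V_\ell\gY\cong(V_\ell Y)^{T_\ell}$, one for each $\ell\neq p$ with its own independently chosen $T_\ell$; via Tate's theorem this produces injections $\q_\ell\otimes D\hookrightarrow\q_\ell\otimes B$ (where $B:=\End^0(\gY)$) of $\q_\ell$-algebras, not a map $D\to B$ over $\q$. The paper never claims the rational embedding and argues differently: any single $\ell$ inert in $K$ already gives $\dim_\q B\geq8$; if $\dim_\q B=8$ (the only alternative to $16$, which is the supersingular case), Oort's classification forces $\gY$ to be the square of an ordinary CM elliptic curve and $B\cong\Mat(2\times2,C)$ for an imaginary quadratic $C$; then $\q_\ell\otimes D\cong\q_\ell\otimes B$ for every $\ell$ inert in $K$ forces all such $\ell$ to be inert in $C$, whence $K=C$---absurd since $K$ is real. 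Your alternative identification of $\gY[p^\infty]$ with the $G_{1,1}^2$-summand is not supported by the ghost's purely $\ell$-adic definition, as you yourself anticipate; the paper does not go that way. Likewise, for the existence of case~(2) the paper simply invokes Lemma~\ref{THEN} together with Proposition~\ref{NOW} (the Newton-polygon jump rules out CM for the generic fiber), rather than your heuristic that extra endomorphisms ``would need to commute with $U$''.
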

\begin{proof}
In view of \cite{mori} we have:
\begin{equation}
\label{monod2}
\q_\ell\otimes\End(Y_{\Fbar})\cong\End_{G_\ell^\circ}(V_\ell Y_{\Fbar})
\end{equation}
In particular $Y_{\Fbar}$ is of CM type if and only if $G_\ell^\circ$ is a torus. In order to determine $G_\ell^\circ$ in the non-CM case, we have to establish the surjectivity of the two projections 
$G_{\ell,\qbar_\ell}^\circ\rightarrow\g_m$ and $G_{\ell,\qbar_\ell}^\circ\rightarrow\SO(3)^2$. The former is clear from $\q_\ell(1)\not\cong\q_\ell$ and to do the latter we may assume that $G_{\ell,\qbar_\ell}^\circ$ 
was conjugated to a subgroup of $\g_m\times\SO(3)$ or $\SO(3)\times\g_m$ or the diagonal $\SO(3)\subset\SO(3)^2$. However, $\qbar_\ell\otimes_{\q_\ell}\End_{G_\ell^\circ}(V_\ell Y_{\Fbar})$ would be 
isomorphic to $\Mat(2\times2,\qbar_\ell)^4$ or $\Mat(4\times4,\qbar_\ell)$ in these three cases, thus contradicting \eqref{monod2} as $\qbar_\ell\otimes\End(Y_{\Fbar})\cong\Mat(2\times2,\qbar_\ell)^2$. 
Now let $\gY\rightarrow\Spec\f_q$ be (a model of) the ghost of $Y$ in the sense of \cite[Section 6]{cadoret}, where $q$ is a sufficiently big power of $p$, so that $\End^0(\gY_{\fbar_p})=\End^0(\gY)=:B$. 
Proceeding to the structure of $\gY$ we choose maximal tori $T_\ell\subset G_\ell^{der}$, for each prime $\ell\neq p$. Subsection \ref{morita1} shows that the $G_\ell$-representation $V_\ell Y_{\Fbar}$ 
gives rise to a four-dimensional space of $T_\ell$-invariants, because any maximal torus of $\SO(3)$ fixes a one-dimensional subspace in its standard representation. The theorem of Tate and
$V_\ell\gY_{\fbar_p}\cong V_\ell Y_{\Fbar}^{T_\ell}$ proves that the dimension of $\gY$ is two. Moreover, $\gY$ must be a supersingular Abelian surface, according to \cite[Proposition 20]{cadoret}. 
\end{proof}

\subsection{On work of Oort and van der Put}
\label{fractal7}

In this subsection we explain how Zarhin's theorem together with the examples of \cite{oort1} can be used for a short disproof of Zarhin's conjecture for all $p\notin\{0,\ell\}$, albeit 
without giving an explicit description of the $\ell$-adic arithmetic monodromy groups of these counterexamples. We have to begin with a lemma on self-dual minuscule representations:

\begin{lem}
\label{fractal1}
Let $G$ be a semisimple connected algebraic group $G$ over an algebraically closed field $C$ of characteristic $0$. Let $\rho:G\rightarrow\GL(V/C)$ be a minuscule representation of $G$ on a vector space $V$ of 
finite dimension over $C$ and let $V^G\subset V$ denote the subspace which is fixed by $G$. If $(\rho,V)$ is isomorphic to its dual representation on $V^*:=\Hom_C(V,C)$, then $\dim_CV^G\equiv\dim_CV\pmod2$.
\end{lem}
\begin{proof}
Any self-dual representation $V$ can be written as $V_1\oplus\dots\oplus V_r\oplus W\oplus W^*$ for irreducible self-dual representations $(\rho_1,V_1)$, ..., $(\rho_r,V_r)$ and another auxiliary 
representation of $G$ on $W$. Moreover, $V$ is minuscule if and only if all of $V_1$, ..., $V_r$ and $W$ are minuscule. It remains to show that $\frac12\dim_CV_i\in\{\frac12\}\cup\n$ holds for each $i$: Notice 
that the restriction of $\rho_i$ to the center $Z\subset G$ induces a character $\chi_i:Z\rightarrow\g_{m,C}$, due to the irreducibility if $V_i$. Since $(\rho_i,V_i)$ is minuscule we have $\chi_i\neq1$ unless 
$\dim_CV_i=1$, but the presence of a non-degenerate $G$-invariant pairing $\phi_i:V_i\stackrel{\cong}{\rightarrow}V_i^*$ forces the image of $\chi_i$ to be contained in the subgroup $\{1,-1\}\subset\g_{m,C}$. 
At last, notice that the semisimplicity of $G$ implies $\rho(G)\subset\SL(V/C)$, so that $\chi_i^{\dim_CV_i}$ is trivial. We infer that $\dim_CV_i=1$ holds if and only if $\dim_CV_i\equiv1\pmod2$ and we are done.
\end{proof}

\begin{prop}
\label{fractal3}
Let $Y$ be an abelian variety of dimension $1\neq g\equiv1\pmod2$ over a finitely generated extension $F\supset\f_p$ and let $\Fbar$ be the separable closure 
of $F$. If $\End^0(Y_{\Fbar})$ is a quaternion algebra over $\q$, then $Y$ is a counterexample to the Zarhin conjecture (with respect to any $\ell\neq p$).
\end{prop}
\begin{proof}
Once an isomorphism $\qbar_\ell\otimes\End^0(Y_{\Fbar})\cong\Mat(2\times2,\qbar_\ell)$ has been chosen one can write the $G_{Y,\ell,\qbar_\ell}^\circ$-representation $\qbar_\ell\otimes_{\z_\ell}T_\ell Y_{\Fbar}=V$ 
as $W\oplus W$, for some $G_{Y,\ell,\qbar_\ell}^\circ$-representation $W$ of odd dimension $g$. Zarhin's theorem implies the irreducibility of $W$. The existence of a polarization on $Y$ implies $V\cong V^*(1)$, 
so that the Jordan-H\"older theorem allows to deduce $W\cong W^*(1)$ from $W\oplus W\cong W^*(1)\oplus W^*(1)$. Furthermore, $g\neq1$ and $G_{Y,\ell,\qbar_\ell}^{der}\lhd G_{Y,\ell,\qbar_\ell}^\circ$ allow 
to infer that $W$ cannot have any non-zero $G_{Y,\ell,\qbar_\ell}^{der}$-invariants, so that lemma \ref{fractal1} implies that the self-dual $G_{Y,\ell,\qbar_\ell}^{der}$-representation $W$ cannot be minuscule.
\end{proof}

\section{Second example}
\label{2nd}
In this section we obtain the existence of $p$-principally polarized non-CM Abelian $7\cdot8=56$-folds over fields of characteristic $p\notin\{0,2\}$, such that their $\ell$-adic 
geometric monodromy groups are certain $\q_\ell$-forms of a certain number of copies of groups of type $G_2$. Recall that a polarization is called $p$-principal if its degree is 
coprime to $p$. Our construction hinges on a choice of a CM field of degree $2\cdot8=16$, in fact an elaboration of the method of \cite{habil} yields the following more specific result.

\begin{thm}
\label{G2}
Suppose that $L^+$ is a totally real number field of degree $r>7$. Assume that some odd rational prime $p$ is inert and unramified in $L^+$, so that 
$\q_p\otimes L^+\cong L_{\gq^+}^+\cong K(\f_{p^r})$, where $\gq^+$ is the sole prime of $L^+$ over $p$. Moreover, let $L$ be a totally imaginary quadratic extension of $L^+$ 
and assume that $\gq^+$ splits in $L$, so that $\gq^+=\gq\gq^*$, where $*$ denotes the non-trivial element of $\Gal(L/L^+)$ and $\gq$ is one of the two primes of $L$ over $\gq^+$. 
Then there exists a $p$-principally polarized Abelian $7r=g$-fold with Rosati invariant $\O_L$-action $(Y,\lambda,\iota)$ over some finitely generated extension $F$ of $\f_p$ such that: 
\begin{itemize}
\item
For every prime $\gr\nmid p$ of $L$, the smallest $L_\gr$-algebraic subgroup of $\GL(V_\gr/L_\gr)$ containing $\rho_\gr(\Gal(\Fbar/F))$ agrees with the product of the 
homotheties with a simple group of type $G_2$ over $L_\gr$, where $\rho_\gr$ denotes the natural $\Gal(\Fbar/F)$-action on $V_\gr=\q\otimes\invlim_n Y(\Fbar)[\gr^n]$.
\item
The formal isogeny type of $Y[\gq^\infty]$ is $G_{0,1}^{2r}\oplus G_{1,r-1}^3\oplus G_{2,r-2}^2$.
\item
The ghost (in the sense of \cite[Section 6]{cadoret}) of $Y$ is an Abelian $r$-fold allowing complex 
multiplication by $L$ and the formal isogeny type of its $p$-divisible group is $G_{1,r-1}\oplus G_{r-1,1}$.
\end{itemize}
\end{thm}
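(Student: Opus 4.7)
The approach is to imitate the pattern of Section~2 in higher rank, elaborating the method of \cite{habil}. Since $\gq^+$ splits as $\gq\gq^*$ in $L/L^+$, the ring $\z_p\otimes\O_L$ factors as $W(\f_{p^r})\times W(\f_{p^r})$, so the Dieudonn\'e module $\d(Y[p^\infty])$ of any polarized abelian $7r$-fold with Rosati-invariant $\O_L$-action splits as $N_\gq\oplus N_{\gq^*}$, the polarization pairing the two summands perfectly, while $M:=N_\gq$ is free of rank $7$ over $W(\f_{p^r})\otimes_{\z_p}W(k)$. Likewise, at $\gr\nmid p$ the module $V_\gr Y_\Fbar$ is free of rank $7$ over $L_\gr$, and the full PEL envelope $H_\gr$ is essentially $\g_m\times\GL_{7,L_\gr}$. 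Thus, in contrast with Section~2, the $G_2$ feature has to be an \emph{output} of the construction, not a tautology of the PEL datum.

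I would then follow the pattern of Lemma~\ref{morita2} and Proposition~\ref{NOW}. Over $k=\fbar_p$, build a polarized supersingular abelian $7r$-fold $(Y_0,\lambda_0,\iota_0)$ with Rosati-invariant $\O_L$-action whose $\gq$-part $M_0$ carries an $(F,V)$-compatible Cayley algebra structure: a symmetric bilinear form plus an alternating $3$-tensor whose common stabilizer in $\GL_7$ is a prescribed $W(\f_{p^r})$-form of $G_2$. Then over $k[[t_1,\ldots,t_m]]$, with $m$ the dimension of the associated $G_2$-local model, deform the trivial Dieudonn\'e display by precomposing $F,V^{-1}$ with an automorphism $U=\exp(\sum_i[t_i]X_i)\in G_2(\hat W(k[[t]]))$, where the $X_i\in\gg_2\subset\End(M_\gq)$ span a complement to the Lie algebra of the parabolic stabilizing the Hodge filtration. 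A Hasse--Witt calculation as in \cite{norman} verifies that the generic fiber at $\gq$ has the advertised Newton polygon $G_{0,1}^{2r}\oplus G_{1,r-1}^3\oplus G_{2,r-2}^2$; the slopes $0,\tfrac1r,\tfrac2r$ with multiplicities $2r,3r,2r$ correspond to the $7$ weights of an appropriate cocharacter of a maximal torus of $G_2$ on the standard representation, cyclotomically shifted. A descent to a finitely generated subring of $k[[t_1,\ldots,t_m]]$ produces the required triple $(Y,\lambda,\iota)/F$.

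The main obstacle is showing that the derived arithmetic monodromy equals the full $G_2$ rather than a proper subgroup. The upper bound is built in. For the lower bound, I would proceed as in Lemma~\ref{lorentz}: Faltings' theorem provides $\q_\gr\otimes\End(Y_\Fbar)\cong\End_{G_\gr^\circ}(V_\gr Y_\Fbar)$, so an endomorphism computation in the spirit of Lemma~\ref{THEN} suffices. The exotic Newton polygon precludes $Y$ from being of CM type and also bars the possible $L$-isogeny decompositions of $Y$ (factor dimensions must be multiples of $r$, yet the slope multiset $\{0^{2r},(\tfrac1r)^{3r},(\tfrac2r)^{2r}\}$ does not split into such pieces without forcing extra endomorphisms), so $\End^0(Y_\Fbar)=L$. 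Since the proper connected reductive subgroups of $G_2$ of maximal rank --- essentially $\SL_3$, $\SO_4$, and the Levis of maximal parabolics --- all act reducibly on the $7$-dimensional representation (a representation-theoretic check: none of them admits a $7$-dimensional irreducible representation at all), any such subgroup would yield $\End_{G_\gr^\circ}(V_\gr Y_\Fbar)\supsetneq L_\gr$ and is ruled out, forcing $G_\gr^{der}=G_2$.

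Finally, the ghost is immediate: by \cite[Section~6]{cadoret}, $V_\gr\gY_{\fbar_p}\cong V_\gr Y_\Fbar^{T_\gr}$ for a maximal torus $T_\gr\subset G_2$; the zero-weight subspace of the $7$-dimensional representation is one-dimensional, so $\dim_{L_\gr}V_\gr\gY=1$ and hence $\dim\gY=r$. Being defined over a finite field, $\gY$ is CM and inherits the $\O_L$-action, hence is an $L$-CM abelian $r$-fold. The Frobenius slope $\tfrac1r$ on the zero-weight line inside the $\gq$-part of $V_\gr Y_\Fbar$, combined with its Weil-dual at $\gq^*$, yields the formal isogeny type $G_{1,r-1}\oplus G_{r-1,1}$ of $\gY[p^\infty]$.
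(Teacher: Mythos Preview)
Your approach diverges from the paper's in a way that leaves a genuine gap, concentrated in the sentence ``The upper bound is built in.'' Imposing a $G_2$-tensor (the Cayley $3$-form) on the crystalline side does \emph{not} produce a $G_2$-tensor on the $\ell$-adic side: there is no mechanism, short of the standard conjectures, for transporting a crystalline cycle to an $\ell$-adic one. So after your deformation you have $G_\gr^\circ\subset\g_m\times\GL_{7,L_\gr}$ and nothing more. The paper avoids this by a completely different device: it forms the \emph{second exterior power abelian scheme} $Y^{(2)}$ of \cite[Subsection 4.3]{habil}, whose Tate module is (up to a twist) $\bigwedge^2 V_\gr$. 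The decomposition $\bigwedge^2\C_0\cong\gg\oplus\C_0$ as $G$-modules is then realized by honest idempotents in $\End_L^0(Y^{(2)})$, and endomorphisms of abelian varieties \emph{do} transfer between realizations. The actual work is computing $\End_L^0(Y^{(1)})=L$ and $\End_L^0(Y^{(2)})=L\oplus L$; this is where the detour through the $\SL(2)$-embedding $\pi$ (with its irrationality condition (i)) and the full faithfulness of Lemma~\ref{oneSL} are used. Zarhin's theorem applied to \emph{both} $Y^{(1)}$ and $Y^{(2)}$ then pins down $G_\gr^{der}$ by the pair of constraints ``irreducible on $V_\gr$'' and ``exactly two summands on $\bigwedge^2 V_\gr$''.

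Your lower-bound argument has a second, independent gap. It is not true that every proper connected reductive subgroup of $G_2$ acts reducibly on the $7$-dimensional representation: the principal $A_1$ acts as $\operatorname{Sym}^6$ of the standard, which is irreducible. (Your list of ``maximal rank'' subgroups omits it precisely because it has rank $1$.) So $\End_L^0(Y)=L$ alone cannot distinguish $G_\gr^{der}=G_2$ from $G_\gr^{der}=$ principal $A_1$. The paper's exterior-square computation does: for the principal $A_1$ one has $\bigwedge^2\operatorname{Sym}^6\cong\operatorname{Sym}^{10}\oplus\operatorname{Sym}^6\oplus\operatorname{Sym}^2$, three summands rather than two, so $\End_L^0(Y^{(2)})=L\oplus L$ excludes it. Your ghost paragraph is essentially fine once the monodromy statement is established; the paper argues slightly differently, identifying $\gY$ with a power of the CM abelian variety $\Xbar$ used as the special fiber, but the dimension count via the one-dimensional zero-weight space is the same.
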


The proof of theorem \ref{G2} is explained in the subsection \ref{proof}. Our assumption "$r>7$" enters into a construction aiming at a description of the $p$-divisible group $Y[\gq^{*\infty}]$ 
with $W(\f_{p^r})$-action over $k[[t]]$. The idea is to choose $Y[\gq^{*\infty}]$ in the isogeny class of a direct sum of a constant $p$-divisible group of height $3r$ with two copies of a 
non-constant $p$-divisible group of height $2r$ while the dimensions of the $\f_{p^r}$-eigenspaces of $\Lie Y[\gq^{*\infty}]$ are as big as possible, so that the Newton slopes of the generic 
fiber are the ones given in theorem \ref{G2}. This construction is explained in the next subsection, which is an elaboration of \cite[Subsection 2.2]{habil}. With a little bit of extra work theorem 
\ref{G2} can probably be proved for any $r\geq4$, possibly by using the improved method of \cite{E7}. It is tempting to speculate on the cases $r\in\{2,3\}$, which could be consequences 
of Matthew Emerton's $p$-adic variational Hodge conjecture, cf. \cite[Conjecture(2.2)]{emerton}. For the case $r=8$, our construction was announced in the introduction of \cite{habil}.

\subsection{On $\z/r\z$-graded Frobenius modules with $\SL(2)_{K(\f_{p^r})}$-structure}

We need to introduce Zink's windows in the generality which we are going to use, namely over $k[[t]]$, where $k$ is a perfect field of characteristic $p$. Let us write $\tau$ for the Frobenius lift 
on $W(k)[[t]]$ with $\tau(t)=t^p$. A Dieudonn\'e $W(k)[[t]]$-window is a triple $(M,M_1,\phi)$, where $M$ is a finitely generated free $W(k)[[t]]$-module, $M_1\subset M$ is a $W(k)[[t]]$-submodule 
such that $M/M_1$ is a free $k[[t]]$-module and $\phi:M\rightarrow M$ is a $\tau$-linear homomorphism such that $\phi(M_1)$ generates the $W(k)[[t]]$-submodule $pM$. Zink's nilpotence condition 
\cite[Definition 3]{zink1} defines his full subcategory of $W(k)[[t]]$-windows, which turns out to be equivalent to the category of formal $p$-divisible groups over $k[[t]]$, according to \cite[Theorem 4]{zink1}. 
We will write $\BT$ for the equivalence from the former to the latter. The transition from windows to nilpotent displays is achieved with the observation that the image of the (injective) ghost morphism
\begin{eqnarray*}
&W(W(k)[[t]])\hookrightarrow W(k)[[t]]^{\n_0};\,(x_0,x_1,\dots)\mapsto(w_0,w_1,\dots)&\\
&w_n:=\sum_{i=0}^np^ix_i^{p^{n-i}}& 
\end{eqnarray*}
is the subring $\{(w_0,w_1,\dots)\in W(k)[[t]]^{\n_0}\mid\,w_i\equiv\tau(w_{i-1})\pmod{p^i}\forall i\in\n\}$, giving rise to a homomorphism $\kappa:W(k)[[t]]\rightarrow W(W(k)[[t]])$ which satisfies 
$w_i\circ\kappa=\tau^i$ for all $i\in\n_0$ and is called Cartier's diagonal homomorphism. If $\bark$ denotes the precomposition of $\kappa$ with the natural reduction $W(W(k)[[t]])\rightarrow W(k[[t]])$, 
then the display theoretic pendant $(P,Q,F,V^{-1})$ of a window $(M,M_1,\phi)$ is obtained by taking $P=W(k[[t]])\otimes_{\bark,W(k)[[t]]}M$ while $Q$ is the inverse image of the 
$k[[t]]$-module $M/M_1$. The $F$-linear operators $F$ and $V^{-1}$ are induced from the $\tau$-linear operator $\phi$ together with $F\circ\bark=\bark\circ\tau$. Working over 
the smaller ring $W(k)[[t]]\hookrightarrow W(k[[t]])$ paves the way for analyzing the monodromy properties of a $p$-divisible group over $k[[t]]$ by using the faithfully flat extension 
$$W(k)[[t]][\frac1p]\subset K(k)\{\{t\}\}:=\{\sum_ia_it^i|a_i\in K(k),\,v_p(a_i)+i\epsilon\to\infty\,\forall\epsilon>0\}$$
for a trivialization of the Frobenius, which is due to Bernard Dwork. By a $\z/r\z$-gradation on a $W(k)[[t]]$-window 
$(M,M_1,\phi)$ we mean compatible $\z/r\z$-gradations on $M$ and $M_1$ such that $\phi$ is homogeneous of degree $1$.

\begin{lem}
\label{twoSL}
Fix $r>7$ and an auxiliary $\z/r\z$-graded Dieudonn\'e module $H=\bigoplus_{\sigma=1}^rH_\sigma$ of formal isogeny type $G_{r-1,1}$. Then there exists 
a $\z/r\z$-graded $W(k)[[t]]$-window $\tilde I=\bigoplus_{\sigma=1}^r\tilde I_\sigma$ whose special (resp. generic) fibre is of formal isogeny type $G_{r-1,1}^2$ 
(resp. $G_{r-2,2}\oplus G_{1,0}^r$) and a $\z/r\z$-graded $W(k)[[t]]$-window $\tilde M=\bigoplus_{\sigma=1}^r\tilde M_\sigma$ which is isogenous to 
$$\tilde I^{\oplus2}\oplus W(k)[[t]]\otimes_{W(k)}H^{\oplus3}$$ 
and satisfies $\rk_{k[[t]]}\tilde M_\sigma/\tilde M_{\sigma,1}\geq6$ for every $\sigma$.
\end{lem}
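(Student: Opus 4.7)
The plan is to build $\tilde{K}$ first, and then to produce $\tilde{M}$ as a graded lattice within the isogeny class that contains $\tilde{K}^{\oplus 2}\oplus W(k)[[t]]\otimes_{W(k)}H^{\oplus 3}$.

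For $\tilde{K}$, I would mimic the strategy used in the proof of Proposition \ref{NOW}: start from the trivial deformation $W(k)[[t]]\otimes_{W(k)}H^{\oplus 2}$ of the constant $\z/r\z$-graded Dieudonn\'e module $H^{\oplus 2}$, and twist the Frobenius $\phi$ by a well-chosen $\z/r\z$-graded $W(k)[[t]]$-linear automorphism $U$ with $U\equiv\id\pmod{t}$. By construction the special fibre remains of isogeny type $G_{r-1,1}^2$. The only two Newton polygons of height $2r$ and dimension $2r-2$ that dominate $G_{r-1,1}^2$ are $G_{r-1,1}^2$ itself and $G_{r-2,2}\oplus G_{1,0}^r$, so for a sufficiently rich choice of $U$---deforming on several rungs of the graded ladder by multiples of the Teichm\"uller lift $[t]$---the Hasse-Witt matrix at the generic fibre attains the maximal possible rank, forcing the upper polygon exactly as in \cite{norman}.

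With $\tilde{K}$ in hand, set $\tilde{M}^0 := \tilde{K}^{\oplus 2}\oplus W(k)[[t]]\otimes_{W(k)}H^{\oplus 3}$, which is a graded $W(k)[[t]]$-window of height $7r$ and total dimension $2(2r-2)+3(r-1) = 7r-7$. In order to fulfil the constraint $\rk_{k[[t]]}\tilde{M}_\sigma/\tilde{M}_{\sigma,1}\geq 6$ for each $\sigma$, it suffices to prescribe a graded ``signature'' $(d_\sigma)_{\sigma\in\z/r\z}$ with $d_\sigma\geq 6$ and $\sum_\sigma d_\sigma = 7r-7$; such an integer decomposition exists precisely because $r>7$ yields $7r-7>6r$. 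I would then exhibit a $\z/r\z$-graded $W(k)[[t]]$-window $\tilde{M}$, isogenous to $\tilde{M}^0$ and carrying a Hodge filtration realizing this signature. Concretely, one replaces $\tilde{M}^0$ by $\phi$-stable graded sublattices in $\tilde{M}^0[\tfrac1p]$: an elementary divisor move in the $\sigma$-th component can transfer one unit of Hodge codimension to the $(\sigma+1)$-th component via the action of $\phi$, leaving the total $7r-7$ unchanged.

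The main obstacle is to carry out these moves in such a way that the target signature is reached everywhere while preserving the $\z/r\z$-grading, the compatibility $\phi(\tilde{M}_1)$-generates-$p\tilde{M}$, and Zink's nilpotence condition on the special fibre. The last point is automatic, because the special fibre of $\tilde{M}^0$ is of formal isogeny type $G_{r-1,1}^7$ (purely of slope $\tfrac{r-1}{r}$, hence nilpotent), and the nilpotent subcategory is stable under graded isogenies. The redistribution itself is a finite-step combinatorial task whose length is bounded by the total defect $\sum_{d_\sigma^{(0)}<6}(6-d_\sigma^{(0)})$ of the initial signature of $\tilde{M}^0$; the strict bound $r>7$ provides the slack needed to push every graded Hodge-dimension up to at least $6$ without overshooting at another index.
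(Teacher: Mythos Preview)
Your plan has the right overall architecture, but the construction of $\tilde K$ fails as written. The lattice $H^{\oplus2}$ is \emph{rigid} as a $\z/r\z$-graded Dieudonn\'e module: at the unique index $\sigma_1$ with $H_{\sigma_1}/H_{\sigma_1,1}=0$ one has $(H^{\oplus2})_{\sigma_1,1}=(H^{\oplus2})_{\sigma_1}$, while at every other $\sigma$ the Hodge filtration is trivial in the opposite sense, namely $(H^{\oplus2})_{\sigma,1}=p\,(H^{\oplus2})_\sigma$. By Grothendieck--Messing the graded tangent space to the deformation functor is
\[
\prod_{\sigma\in\z/r\z}\Hom_k\!\big((H^{\oplus2})_{\sigma,1}/p,\;(H^{\oplus2})_\sigma/(H^{\oplus2})_{\sigma,1}\big)=0,
\]
so twisting the Frobenius of $W(k)[[t]]\otimes_{W(k)}H^{\oplus2}$ by \emph{any} graded automorphism $U\equiv\id\pmod t$ returns a window isomorphic to the constant one. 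Your Hasse--Witt argument therefore cannot get off the ground, no matter how many rungs you deform on. (Incidentally, your ``only two Newton polygons'' claim is correct, but only because of the $\z/r\z$-grading; over $W(k)$ without the extra structure there are more.)

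The paper repairs this by first replacing $H^{\oplus2}$, within its isogeny class, by a lattice $K$ whose graded Hodge numbers are $(1,1,2,\dots,2)$, the two drops occurring at indices $\sigma_1,\sigma_2$ with $5\le\sigma_2-\sigma_1\le r-2$. Only then is the graded deformation space positive-dimensional (one parameter at each of $\sigma_1,\sigma_2$), and a twist by nilpotent $N_j$ at those two spots raises the generic $p$-rank via \cite{wedhorn}. There is a second, related divergence in your treatment of $\tilde M$: the paper performs all lattice modifications \emph{at the special fibre}---choosing flags between $\phi^{\sigma_2-\sigma_1}(M_1)$ and $M_2$, and between $\phi^{r-(\sigma_2-\sigma_1)}(M_2)$ and $M_1$---while insisting that $M_\sigma=K_\sigma^{\oplus2}\oplus H_\sigma^{\oplus3}$ precisely at $\sigma\in\{\sigma_1,\sigma_2\}$. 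That equality is what allows the very same nilpotent twists $N_j$ (extended by zero on the $H$-summands) to act on $M$, so that $\tilde M$ is visibly isogenous to $\tilde K^{\oplus2}\oplus W(k)[[t]]\otimes H^{\oplus3}$. Your proposal to deform first and then hunt for a sublattice of $\tilde M^0[\tfrac1p]$ over $W(k)[[t]]$ would oblige you to redo this compatibility over the non-field base, and you have not indicated how.
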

\begin{proof}
The assumption on the formal isogeny type of $H=\bigoplus_{\sigma=1}^rH_\sigma$ implies that $\rk_{W(k)}H_\sigma=1$ 
(resp. $\dim_kH_\sigma/H_{\sigma,1}=1$) holds for every (resp. for all but one) element $\sigma$ of $\z/r\z$, i.e.
\begin{eqnarray*}
&&\dim_kH_\sigma/H_{\sigma,1}=\begin{cases}0&\sigma\equiv\sigma_1\pmod r\\1&\mbox{ otherwise}\end{cases}\\
&&\rk_{W(k)}H_\sigma=1
\end{eqnarray*}
for some $\sigma_1\in\z$. Notice that the requested properties of $\tilde I$ (resp. $\tilde M$) force its special fiber to lie in the 
isogeny class of $H^{\oplus2}$ (resp. $H^{\oplus7}$). Fix $\sigma_2\in\z$ satisfying $5\leq\sigma_2-\sigma_1\leq r-2$, along with 
a $\z/r\z$-graded Dieudonn\'e module $I=\bigoplus_{\sigma=1}^rI_\sigma$ of formal isogeny type $G_{r-1,1}^2$ and satisfying
\begin{eqnarray*}
&&\dim_kI_\sigma/I_{\sigma,1}=\begin{cases}1&\sigma\equiv\sigma_1\pmod r\\1&\sigma\equiv\sigma_2\pmod r\\2&\mbox{ otherwise}\end{cases}\\
&&\rk_{W(k)}I_\sigma=2
\end{eqnarray*}
for every $\sigma$. Working in the category of windows, we describe an equicharacteristic deformation of $I$ whose generic fiber has the 
formal isogeny type $G_{r-2,2}\oplus G_{1,0}^r$. We start out from $\tilde I_\sigma:=W(k)[[t]]\otimes_{W(k)}I_\sigma$ and define a new 
Frobenius thereon by precomposition (of the $\tau$-linear extension to $\tilde I_{\sigma-1}$) of $\phi:I_{\sigma-1}\rightarrow I_\sigma$ with 
$$U_\sigma:=\begin{cases}\id_{\tilde I_\sigma}+t\otimes N_1&\sigma\equiv\sigma_1\pmod r\\
\id_{\tilde I_\sigma}+t\otimes N_2&\sigma\equiv\sigma_2\pmod r\\
\id_{\tilde I_\sigma}&\mbox{ otherwise}\end{cases}$$
where $N_j$ denotes endomorphisms of $I_{\sigma_j}$ satisfying $\ker(N_j)=N_jI_{\sigma_j}\nsubseteq I_{{\sigma_j},1}$ for $j\in\{1,2\}$. Indeed, it is known that 
$W(k((t)))\otimes_{W(k)[[t]]}\tilde I$ has non-zero $p$-rank, at least for good choices of $N_1$ and $N_2$ according to \cite[Proposition 4.1.4]{wedhorn}. The definition of the window 
$\tilde I$ is not completed before one has decreed $\tilde I_{\sigma,1}:=p\tilde I_\sigma+W(k)[[t]]\otimes_{W(k)}I_{\sigma,1}$. Our prime interest lies in $\z/r\z$-graded Dieudonn\'e sublattices: 
\begin{equation}
\label{comp}
M_\sigma\subset I_\sigma^{2\oplus}\oplus H_\sigma^{3\oplus}
\end{equation}
We require that $M_\sigma$ satisfies $\dim_kM_\sigma/M_{\sigma,1}\geq6$ for every $\sigma$ and that \eqref{comp} is an equality for $\sigma\in\{\sigma_1,\sigma_2\}$. 
Let us check that lattices with these properties exist: Starting out from $M_j:=I_{\sigma_j}^{\oplus2}\oplus H_{\sigma_j}^{\oplus3}$ we observe that the $W(k)$-length 
of $M_2/\phi^{\sigma_2-\sigma_1}(M_1)$ (resp. $M_1/\phi^{\sigma_1-\sigma_2+r}(M_2)$) is equal to $5$ (resp. equal to $2$). So let us pick flags of $W(k)$-modules 
$$pM_2\subsetneq\phi^{\sigma_2-\sigma_1}(M_1)=F_0\subsetneq F_1\subsetneq F_2\subsetneq F_3\subsetneq F_4\subsetneq M_2=F_5=\dots$$
(resp. $pM_1\subsetneq\phi^{\sigma_2-\sigma_1+r}(M_2)=E_0\subsetneq E_1\subsetneq M_1=E_2=\dots$) and define $M_\sigma=\phi^{\sigma-\sigma_2}(F_{\sigma-\sigma_1})$ 
provided that $\sigma_1\leq\sigma\leq\sigma_2$ (resp. $M_\sigma=\phi^{\sigma-\sigma_1}(E_{\sigma-\sigma_2+r})$ whenever $\sigma_2-r\leq\sigma\leq\sigma_1$). We still have to 
construct our $\z/r\z$-graded $W(k)[[t]]$-window $\bigoplus_{\sigma=1}^r\tilde M_\sigma$. Again, we start out from $\tilde M_\sigma:=W(k)[[t]]\otimes_{W(k)}M_\sigma$ 
and define a new Frobenius thereon by precomposition (of the $\tau$-linear extension to $\tilde M_{\sigma-1}$) of $\phi:M_{\sigma-1}\rightarrow M_\sigma$ with 
$$U_\sigma:=\begin{cases}\id_{\tilde M_\sigma}+t\otimes N'_1&\sigma\equiv\sigma_1\pmod r\\
\id_{\tilde M_\sigma}+t\otimes N'_2&\sigma\equiv\sigma_2\pmod r\\
\id_{\tilde M_\sigma}&\mbox{ otherwise}\end{cases}$$
where $N'_j$ denotes the endomorphism of $M_j$ which agrees with $N_j$ on the two copies of $I_{\sigma_j}$ and vanishes on the three copies of $H_{\sigma_j}$.
\end{proof}

We use the terminology "$F$-isocrystal" for pairs $(M,\phi)$ consisting of a finite dimensional $K(k)$-vector space $M$ and an isomorphism $^FM:=K(k)\otimes_{F,K(k)}M\stackrel{\phi}{\rightarrow}M$ 
while "Frobenius-module" is used for pairs $(M,\phi)$ consisting of a finitely generated free $W(k)[[t]][\frac1p]$-module $M$ and an isomorphism
$$^\tau M:=W(k)[[t]][\frac1p]\otimes_{\tau,W(k)[[t]][\frac1p]}M\stackrel{\phi}{\rightarrow}M,$$ 
where $\tau$ was defined at beginning of this subsection. By a $\z/r\z$-gradation on an $F$-isocrystal or Frobenius-module $(M,\phi)$ we mean a decomposition $M=\bigoplus_{\sigma=1}^rM_\sigma$ 
satisfying $\phi(M_\sigma)\subset M_{\sigma+1}$. The category of $\z/r\z$-graded Frobenius-modules forms a $K(\f_{p^r})$-linear rigid $\otimes$-category in the usual way. Moreover, there exist 
two interesting $\otimes$-functors to the $K(\f_{p^r})$-linear rigid $\otimes$-categories of $\z/r\z$-graded $F$-isocrystals, namely the formation of the special fiber over $k$, i.e. $M\mapsto M/tM$ 
and the formation of the generic fiber over the perfection $k((t))^{perf}$ of $k((t))$, i.e. $M\mapsto K(k((t))^{perf})\otimes_{W(k)[[t]][\frac1p]}M$. The latter employs Cartier's diagonal morphism 
$W(k)[[t]]\rightarrow W(W(k)[[t]])$ precomposed with $W(W(k)[[t]])\rightarrow W(k((t))^{perf})$. If $k$ is algebraically closed, then the $K(\f_{p^r})$-linear rigid full $\otimes$-subcategory of $\z/r\z$-graded 
$F$-isocrystals whose Newton slopes are zero is equivalent to the category of finite dimensional $K(\f_{p^r})$-vector spaces by means of the skeleton, which is the functor $M\mapsto\{x\in M_0\mid \phi^r(x)=x\}$. 
In the following result $\bRep_0(\SL(2)_{K(\f_{p^r})})$ stands for the $K(\f_{p^r})$-linear tannakian category of finite dimensional representations of the group $\SL(2)$ over the ground field $K(\f_{p^r})$.

\begin{lem}
\label{oneSL}
Consider a $\z/r\z$-graded Frobenius-module $I=\bigoplus_{\sigma=1}^rI_\sigma$ over an algebraically closed ground field $k$ of characteristic $p$. Assume that all Newton 
slopes of its special (resp. generic) fiberare zero (resp. non-zero) and that each $I_\sigma$ is free of rank $2$. Then there exists a fully faithful $K(\f_{p^r})$-linear rigid 
$\otimes$-functor $M$ from $\bRep_0(\SL(2)_{K(\f_{p^r})})$ to the $K(\f_{p^r})$-linear rigid $\otimes$-category of $\z/r\z$-graded Frobenius-modules such that the following hold:
\begin{itemize}
\item[(i)]
The canonical fiber functor on $\bRep_0(\SL(2)_{K(\f_{p^r})})$ is isomorphic to the $K(\f_{p^r})$-linear rigid 
$\otimes$-functor $\rho\mapsto S(\rho)$ where $S(\rho)$ denotes the skeleton of the special fiber of $M(\rho)$.
\item[(ii)]
Applying $M$ to the standard representation of $\SL(2)_{K(\f_{p^r})}$ yields $I$.
\end{itemize}
\end{lem}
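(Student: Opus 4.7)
The strategy is to identify the Tannakian group of the $\otimes$-subcategory generated by $K$ with $\SL(2)_{K(\f_{p^r})}$ and then invoke Tannakian duality to obtain $M$.

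First I set up the fiber functor. The endomorphism ring of the unit object in the $K(\f_{p^r})$-linear rigid $\otimes$-category of $\z/r\z$-graded Frobenius-modules over $W(k)[[t]][\frac1p]$ is easily seen to be $W(k)[[t]][\frac1p]^{\tau^r=1}=K(\f_{p^r})$. Let $\mathcal{F}^0$ denote the full $\otimes$-subcategory of objects with unit-root special fibre; it is closed under subquotients, tensor products, and duals by the additivity of Newton slopes. On $\mathcal{F}^0$ the skeleton $X\mapsto I(X):=(X^{\mathrm{sp}}_0)^{\phi^r=1}$ defines a $K(\f_{p^r})$-linear, exact, faithful, rigid $\otimes$-functor from $\mathcal{F}^0$ to finite-dimensional $K(\f_{p^r})$-vector spaces, whose image has dimension equal to the $W(k)$-rank of $X^{\mathrm{sp}}_0$; hence $\dim_{K(\f_{p^r})}I(K)=2$. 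Setting $\mathcal{T}:=\langle K\rangle^\otimes\subset\mathcal{F}^0$ and $G:=\underline{\Auto}^\otimes(I|_{\mathcal{T}})\subseteq\GL(I(K))\cong\GL(2)_{K(\f_{p^r})}$, Tannakian duality supplies an equivalence $\mathcal{T}\simeq\bRep_0(G)$. The desired $M$ is then a quasi-inverse of this equivalence composed with $\mathcal{T}\hookrightarrow\mathcal{F}$, with properties (i) and (ii) built in, provided I establish $G=\SL(2)_{K(\f_{p^r})}$.

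For $G\subseteq\SL(2)$, I must show $\det K\cong\mathbf{1}$ in $\mathcal{F}^0$. By additivity the special Newton slope of $\det K$ vanishes; by the specialization theorem for Newton polygons, the generic and special polygons of $\det K$ share endpoints, so the generic slope also vanishes. Consequently the generic slopes of $K$ are $(s,-s)$ for a non-zero rational $s$. I would then prove a rigidity lemma to the effect that every rank-$1$ slope-$0$ $\z/r\z$-graded Frobenius-module over $W(k)[[t]][\frac1p]$ is isomorphic to $\mathbf{1}$, by combining Lang-type surjectivity of the map $v\mapsto\tau^r(v)/v$ on $W(k)^\times$ (for $k$ algebraically closed) with a $t$-adic successive-approximation argument on the pro-unipotent group $1+tW(k)[[t]]$. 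This yields $\det K\cong\mathbf{1}$. For $G\supseteq\SL(2)$, I rule out the remaining proper closed subgroups of $\SL(2)_{K(\f_{p^r})}$: a Borel containment would produce a rank-$1$ subobject $L\subset K$ in $\mathcal{F}^0$; the rigidity lemma forces $L\cong\mathbf{1}$, so $L$ contributes generic slope $0$, contradicting the hypothesis. A maximal-torus containment would split $K\cong L_1\oplus L_2$ over $K(\f_{p^r})$, and a normalizer-of-torus containment yields such a splitting after the quadratic extension $K(\f_{p^{2r}})$ (which preserves Newton data)---both yielding the same contradiction. Finally, a finite-subgroup containment is excluded because the simple objects of $\mathcal{T}\simeq\bRep_0(G)$ would form a finite list with a uniform upper bound $S$ on their maximal generic Newton slope, whereas the objects $\mathrm{Sym}^n K\in\mathcal{T}$ have maximal generic slope $ns$, which grows without bound.

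The main obstacle is the rigidity statement for rank-$1$ slope-$0$ Frobenius-modules over $W(k)[[t]][\frac1p]$, which underpins both the triviality of $\det K$ and the exclusion of rank-$1$ subobjects. Granted this, the identification $G=\SL(2)_{K(\f_{p^r})}$ follows and the construction of $M$ is a routine application of the Tannakian formalism.
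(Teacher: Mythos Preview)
Your case analysis for pinning down the group as $\SL(2)$ is essentially right, and the rigidity lemma you isolate is genuinely the crux (the paper's terse phrase ``$\bigoplus_\sigma\bigwedge^2K_\sigma$ must be constant'' is exactly this statement, and your successive-approximation proof of it is correct). One small simplification: the normalizer case does not need a quadratic base change. Your rigidity lemma already forces every rank-$1$ object in $\mathcal F^0$ to be trivial, so in particular the sign character of $N(T)/T$ would be trivial, which pushes $G$ down into $T$ and reduces to the torus case you have already excluded.

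The real gap is earlier, in the appeal to Tannakian duality. The category of $\z/r\z$-graded Frobenius-modules over $R:=W(k)[[t]][\frac1p]$ is rigid $\otimes$ but \emph{not abelian}: $R$ is a one-dimensional regular domain with infinitely many maximal ideals (one for each distinguished polynomial), and cokernels of maps of free $R$-modules can have torsion. So Deligne's reconstruction theorem does not apply to $\mathcal T=\langle K\rangle^\otimes$ as stated, and your sentence ``Tannakian duality supplies an equivalence $\mathcal T\simeq\bRep_0(G)$'' is not justified. Saying $I$ is ``exact'' already presupposes an exact structure you have not specified.

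The paper sidesteps this entirely. Rather than reconstructing $G$ from an abstract fibre functor, it passes to the faithfully flat overring $K(k)\{\{t\}\}\supset R$ of power series convergent on the open unit disc, over which (by Messing--Zink, using that the special fibre is unit-root) the module $K$ can be trivialised: $\Theta:K_0\otimes_R K(k)\{\{t\}\}\cong J\otimes_{K(\f_{p^r})}K(k)\{\{t\}\}$. The two pullbacks of $\Theta$ differ by a single element $\theta\in\GL(J)\bigl(K(k)\{\{t\}\}\otimes_R K(k)\{\{t\}\}\bigr)$, and one \emph{defines} $G_0$ as its Zariski closure. The functor $M$ is then built object-by-object via faithfully flat descent: for each $(\rho,V)\in\bRep_0(G_0)$ one equips $V\otimes K(k)\{\{t\}\}$ with the descent datum $\rho(\theta)$ and descends. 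Full faithfulness follows because $\theta$ generates $G_0$ Zariski-densely, so $G_0$-equivariance is the same as $\theta$-equivariance, which is the same as compatibility with the descent data. No abelian structure on the target is ever needed. Your approach could presumably be repaired along similar lines, but as written the invocation of Tannakian duality over $R$ is the missing step.
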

\begin{proof}
Let $J$ be the skeleton of the special fiber of $I$, clearly $\dim_{K(\f_{p^r})}J=2$. Using the methods of \cite[Proposition 29]{meszin} there is a canonical $\phi^r$-equivariant isomorphism
\begin{equation}
\label{zeroSL}
\Theta:I_0\otimes_{W(k)[[t]][\frac1p]}K(k)\{\{t\}\}\stackrel{\cong}{\rightarrow}J\otimes_{K(\f_{p^r})}K(k)\{\{t\}\}
\end{equation}
where $K(k)\{\{t\}\}\subset K(k)[[t]]$ denotes the subring of power series that converge on the open unit 
disc. Let $G_0$ be the smallest $K(\f_{p^r})$-subgroup of $\GL(J/K(\f_{p^r}))$ containing the element:
$$\theta:=(\Theta\otimes1)\circ(1\otimes\Theta)^{-1}\in\GL(J/K(\f_{p^r}))(K(k)\{\{t\}\}\otimes_{W(k)[[t]]}K(k)\{\{t\}\})$$
Since $W(k)[[t]][\frac1p]\rightarrow K(k)\{\{t\}\}$ is faithfully flat we can use descent theory to construct a fully faithful functor $M$ from $\bRep_0(G_0)$ to the $K(\f_{p^r})$-linear rigid $\otimes$-category 
of $\z/r\z$-graded Frobenius-modules. Notice $G_0\subset\SL(J/K(\f_{p^r}))$, because $\bigoplus_{\sigma=1}^r\bigwedge_{W(k)[[t]][\frac1p]}^2I_\sigma$ must be constant. It remains to prove that $G_0$ 
contains $\SL(J/K(\f_{p^r}))$. The assumption on the slopes of the generic fiber leads to $0<\dim_{K(\f_{p^r})}G_0$. However, the only maximal proper subgroups of $\SL(J/K(\f_{p^r}))$ of positive dimension 
are the Borel group and (four different forms of) $\g_m\rtimes\{\pm1\}$. Suppose $G_0\cong\g_m\rtimes\{\pm1\}$ for instance, so that there is a decomposition $J=J'\oplus J''$ such that $G_0$ stabilizes 
$J'\cup J''$. Let us write $g_0$ for the unique non-trivial involution of $\PGL(J/K(\f_{p^r}))$ whose eigenspaces are $J'$ and $J''$ with eigenvalues $\pm1$, and observe that $g_0G_0g_0^{-1}=G_0$. Descent 
theory yields a global section $s_0$ of $\PGL(I_0/W(k)[[t]][\frac1p])$ such that $\Theta s_0\Theta^{-1}$ agrees with $g_0$, simply because $(\Theta\otimes1)^{-1}g_0(\Theta\otimes1)=(1\otimes\Theta)^{-1}g_0(1\otimes\Theta)$ 
is implied by $g_0\theta g_0^{-1}=\theta$. Using that $W(k)[[t]][\frac1p]$ is a principal ideal domain we can find a global section 
$s$ of $\GL(I_0/W(k)[[t]][\frac1p])$ which lifts $s_0$. Moreover, it does no harm to assume that the $p$-adic valuation of $s^2=:a\in K(k)^\times=W(k)[[t]][\frac1p]^\times$ is $0$ or $1$. The $\phi^r$-equivariance 
of \eqref{zeroSL} implies that $s$ commutes with the $r$th iterate of the Frobenius on $I_0$. Consequently this object can be regarded as a $W(k)[[t]][\frac1p,x]/(x^2-a)$-module of rank one together with 
an isomorphism $^{\tau^r}I_0\stackrel{\phi^r}{\rightarrow}I_0$, which therefore does not allow non-trivial specializations of Newton-polygons. We leave to the reader to check that in each of the other three cases the 
special and generic Newton-polygons of a Frobenius module with $G_0$-structure would agree too, which stands in contradiction to the assumptions on the Newton-polygons of $I=\bigoplus_{\sigma=1}^rI_\sigma$.
\end{proof}

\subsection{Proof of theorem \ref{G2}}
\label{proof}
Consider the $\q$-group $G:=\Auto(\8)$, where $\8$ stands for the $8$-dimensional division algebra of octonions over $\q$. The Lie-algebra $\gg$ of $G$ can be identified with the space of 
derivations of $\8$ and the actions of both $G$ and $\gg$ preserve the positive definite symmetric form $\8\times\8\rightarrow\q;(x,y)\mapsto x\ybar+y\xbar$ and the $7$-dimensional subspace 
$\8_0=\{x\in\8\mid\,\xbar=-x\}$, where $\8\rightarrow\8;x\mapsto\xbar$ is the canonical conjugation. Notice that $G(\r)$ is compact because it preserves a positive definite form. We need a group theoretical lemma:

\begin{lem}
If $G$ is as above, then there exists a homomorphism
\begin{equation}
\label{root}
\pi:\SL(2)_{K(\f_{p^r})}\rightarrow G_{K(\f_{p^r})}
\end{equation}
satisfying the following:
\begin{itemize}
\item[(i)]
No proper $L^+$-subgroup of $G_{L^+}$ contains the image of $\pi$.
\item[(ii)]
There exists an isomorphism $K(\f_{p^r})\otimes\8_0\cong\sd_{K(\f_{p^r})}^{\oplus2}\oplus K(\f_{p^r})^{\oplus3}$ in the category of $\SL(2)_{K(\f_{p^r})}$-representations. 
\end{itemize}
\end{lem}
\begin{proof} 
Notice that $G_{\q_p}$ is split, for example by \cite{springer}. Homomorphisms satisfying (ii) arise from the long simple root in some based root system of the split group $G_{\q_p}$, 
alternatively one could think of our embedded $\SL(2)$ as the commutator subgroup of the Levi factor of the maximal proper standard parabolic subgroup arising from the 
removal of the short simple root. The lemma follows because the set of homomorphisms satisfying (ii) and violating (i) is a countable union of nowhere dense closed subsets.
\end{proof}

Let $\Xbar$ be a polarized Abelian variety with complex multiplication by $\O_L$ such that the formal isogeny type of $\Xbar[\gq^{*\infty}]$ is $G_{r-1,1}$. Following \cite[Lemme 5]{tate} it can be 
constructed from a suitable CM-type. As $\gq$ and $\gq^*$ are the only prime divisors of $p\O_L$, we can write the complex embeddings of $L$ as $\iota_0\circ F^i\circ\iota_\gq\circ*^j$, where:
\begin{itemize}
\item
$\iota_0:L_\gq\rightarrow\C$ is a fixed embedding.
\item
$\iota_\gq:L\rightarrow L_\gq;\alpha\mapsto\alpha_\gq$ denotes the passage to the $\gq$-adic completion.
\item
$F$ denotes the absolute Frobenius on $L_\gq\cong K(\O_L/\gq)$.
\item
$i$ runs through $\{0,\dots,r-1\}$.
\item
$j$ runs through $\{0,1\}$.
\end{itemize}
The CM type we wish to pick is $\Phib:=\{\iota_0\circ\iota_\gq,\iota_0\circ F\circ\iota_\gq\circ*,\dots,\iota_0\circ F^{r-1}\circ\iota_\gq\circ*\}$, so that a resulting CM-Abelian variety is the cokernel of 
\begin{equation}
\label{converse1}
\O_L\rightarrow\C^r;\alpha\mapsto(\iota_0(\alpha_\gq),\iota_0({^F(\alpha^*)_\gq}),\dots,\iota_0({^{F^{r-1}}(\alpha^*)_\gq})),
\end{equation}
which is definable over the integral closure $\O$ of $\iota_0(L_\gq)$ in $\C$. We obtain $\Xbar$ as its $\mod\gP$-reduction, where $\gP$ is the maximal ideal of $\O$, which can be identified with the ring of integers of $\qbar_p$. 
It goes without saying that its ground field is $k:=\O/\gP$, i.e. the algebraic closure of $\O_L/\gq$. We also need to turn \eqref{converse1} and $\Xbar$ into polarized CM Abelian varieties by choosing a polarization of the form 
\begin{equation}
\label{converse2}
\O_L^2\ni(x,y)\mapsto2i\pi\tr_{L/\q}(vxy^*),
\end{equation}
for an auxiliary element $-v^*=v\in\O_L\backslash\{0\}$ such that $\iota_0({^Fv_\gq})$, ..., $\iota_0({^{F^{r-1}}v_\gq})$ are lying in the upper half plane and $\iota_0(v_\gq)$ in the lower one. Consider the $\z/r\z$-graded windows 
$\tilde I$ and $\tilde M$, as provided by lemma \ref{twoSL}, when applied to the $\z/r\z$-graded Dieudonn\'e module $H$ with $\BT(\bigoplus_{\sigma=1}^rH_\sigma)\cong\Xbar[\gq^{*\infty}]$. Moreover, let us write $M$ for 
the fully faithful $\z/r\z$-graded Frobenius module with $\SL(2)_{K(\f_{p^r})}$-structure resulting from applying lemma \ref{oneSL} to $\q\otimes\bigoplus_{\sigma=1}^r\Hom_{W(k)}(H_\sigma,\tilde I_\sigma)$. Observe that the 
special fiber $\BT(\tilde M/t\tilde M)$ of $\BT(\tilde M)$ is canonically isogenous to $\8_0\otimes\Xbar[\gq^{*\infty}]$, simply because part (ii) of lemma \ref{oneSL} tells us that $\q\otimes\tilde M$ agrees with $M(\pi)\otimes H$ 
(by slight abuse of notation we may regard $\pi$ as a representation of $\SL(2)_{K(\f_{p^r})}$ on $L_{\gq^*}\otimes\8_0\cong\sd_{L_{\gq^*}}^{\oplus2}\oplus L_{\gq^*}^{\oplus3}$). This puts us into a position allowing the use of the 
Serre-Tate theorem: Over $k[[t]]$ there exists a canonical $p$-principally polarized Abelian scheme $Y^{(1)}$ with Rosati-invariant $\O_L$-operation such that its special fiber lies in the isogeny class $\8_0\otimes\Xbar$ while:
\begin{equation}
\label{Opa01}
Y^{(1)}[\gq^{*\infty}]\cong\BT(\bigoplus_{\sigma=1}^r\tilde M_\sigma)
\end{equation}
This implies that the $k[[t]]$-ranks of the $F^\sigma\circ\iota_\gq$-eigenspaces of $\Lie Y^{(1)}$ are at most equal to one and we let $\Omega$ consist of all $\sigma\in\{1,\dots,r\}$ for which the said eigenspace 
is of $k[[t]]$-rank equal to one (N.B.: $\Omega=\{\sigma_1,\sigma_1+1,\sigma_1+2,\sigma_1+3,\sigma_1+4,\sigma_2,\sigma_2+1\}$ if $\sigma_1$ and $\sigma_2$ are as in the proof of lemma \ref{twoSL}).\\ 
The crux of our argument is the 2nd exterior power Abelian scheme, which was discovered somewhat implicitly in \cite[Chapter IV, Paragraph 5, Exercise 1]{satake} over $\C$ and was generalized and reconsidered in 
\cite[subsection 4.3]{habil}. Its construction necessitates the introduction of integral models over $W(\O_L/\gq)$ of certain Shimura varieties of PEL type. Their moduli interpretations involve $p$-principally polarized products 
$Y^{(0)}\times_SY^{(1)}$ of two Abelian schemes with Rosati invariant $\O_L$-operations over variable bases $S\rightarrow\Spec W(\O_L/\gq)$ such that $\dim_SY^{(0)}=r$, all $F^\sigma\circ\iota_\gq$-eigenspaces 
of $\Lie Y^{(0)}$ vanish, $\dim_SY^{(1)}=7r$ and each $F^\sigma\circ\iota_\gq$-eigenspace of $\Lie Y^{(1)}$ is an invertible $\O_S$-module (resp. vanishes) whenever $\sigma\in\Omega$ (resp. 
$\sigma\notin\Omega$), supplemented with some bookkeeping of level structures, following \cite{kottwitz}. The resulting moduli scheme, called $\M_{\gp}^{(0\times1)}$ in loc.cit., serves as a base for a universal 
$p$-principally polarized 2nd exterior power Abelian scheme $Y^{(2)}$, which is equipped with a Rosati-invariant $\O_L$-operation, is of relative dimension $21r$ and has $F^\sigma\circ\iota_\gq$-eigenspaces 
of $\Lie Y^{(2)}$ that are a locally free $\O_{\M_\gp^{(0\times1)}}$-module of rank $6$ or $0$ depending on whether or not $\sigma\in\Omega$. In fact, there is a canonical isomorphism of polarized Hodge structures 
\begin{equation}
\label{bla}
TY_\xi^{(0)}\otimes_{\O_L}TY_\xi^{(2)}\cong\bigwedge_{\O_L}^2TY_\xi^{(1)}
\end{equation}
for every $\xi:\Spec\C\rightarrow\M_\gp^{(0\times1)}$. The restriction of $Y^{(2)}$ to the generic fiber of $\M_\gp^{(0\times1)}$ arises from combining functoriality properties of 
canonical models with their moduli interpretations (cf. \cite[Corollaire 5.4]{deligne4}, \cite[Th\'eor\`eme 4.21]{deligne4}). Please see to \cite[subsection 4.3]{habil} and its references for 
explanations of the extension process to the whole of $\M_\gp^{(0\times1)}$. It turns out that $\M_\gp^{(0\times1)}$ is not only smooth, but also projective, thanks to $\sharp\Omega=7<r$. 
If one writes $N$ for the $\z/r\z$-graded Dieudonn\'e module with $\BT(\bigoplus_{\sigma=1}^rN_\sigma)\cong Y^{(0)}[\gq^{*\infty}]$, then the 2nd exterior power $Y^{(2)}$ satisfies:
\begin{equation}
\label{Opa02}
Y^{(2)}[\gq^{*\infty}]\cong\BT(\bigoplus_{\sigma=1}^r\Hom_{W(k)}(N_\sigma,\bigwedge_{W(k)}^2\tilde M_\sigma))
\end{equation}
Moreover, according to \cite[Proposition 5.1]{habil} we have a commutative diagram
\begin{equation}
\label{Opa03}
\begin{CD}
\sy_L^2\End_L^0(Y^{(1)}\times_{k[[t]]}k)@>>>\sy_{K(\f_{p^r})}^2\End(\bigoplus_{\sigma=1}^r\tilde M_\sigma/t\tilde M_\sigma)\\
@VVV@VVV\\
\End_L^0(Y^{(2)}\times_{k[[t]]}k)@>>>\End(\bigoplus_{\sigma=1}^r\bigwedge_{W(k)}^2\tilde M_\sigma/t\tilde M_\sigma)
\end{CD},
\end{equation}
where the horizontal arrows are induced from the isomorphisms \eqref{Opa01} and \eqref{Opa02}. We have a decomposition $\bigwedge^2\8_0\cong\gg\oplus\8_0$, of which the projection to the first 
summand is sketched in \cite[Chapter V, Section 19, Exercise 5]{humphreys} while its projection to the second summand results from the commutator of octonions. The full faithfulness of $M$ implies
\begin{eqnarray*}
&&\End^0_L(Y^{(1)})=\{\alpha\in L\otimes \End(\8_0)\mid\,\{\alpha_{\gq^*},\alpha_{\gq^*}^*\}\subset\End_{\SL(2)}(L_{\gq^*}\otimes\8_0)\}=\\
&&L\otimes\End_G(\8_0)=L\\
&&\End^0_L(Y^{(2)})=\{\alpha\in L\otimes\End(\bigwedge^2\8_0)\mid\,\{\alpha_{\gq^*},\alpha_{\gq^*}^*\}\subset\End_{\SL(2)}(L_{\gq^*}\otimes\bigwedge^2\8_0)\}\\
&&=L\otimes\End_G(\bigwedge^2\8_0)\cong L\oplus L,
\end{eqnarray*}
where $\alpha_{\gq^*}$ (resp. $\alpha_{\gq^*}^*$) denotes the image of $\alpha$ (resp. $\alpha^*$) in $L_{\gq^*}\otimes\End(\8_0\mbox{ or }\bigwedge^2\8_0)$. The final step of the proof consists of choosing a model 
$Y$ of the generic fiber of $Y^{(1)}$ over some finitely generated subfield $F\subset k((t))$. It does no harm to assume that all endomorphisms of $Y$ and its 2nd exterior power are defined over $F$, and we also decree 
$F$ to contain $\f_{p^r}$. The result follows from applying Zarhin's theorem to $Y$ and its 2nd exterior power, combined with some multilinear bookkeeping of Tate modules involving $\ell$-adic analogs of \eqref{Opa02} and \eqref{Opa03}.\\
The ghost $\gY$ of $Y$ must be a power of $\Xbar$, since $\Xbar^{\oplus7}$ is isogenous to a specialization of $Y$ while 
$\Xbar$ is simple, because the formal isogeny type $G_{1,r-1}\oplus G_{r-1,1}$ cannot be written as a sum of two self-dual ones. In order to show the last assertion of theorem \ref{G2}, 
it remains to show that $[L:\q]=\dim_{\q_\ell}V_\ell Y_{\Fbar}^{T_\ell}$ for some maximal torus $T_\ell\subset G_\ell^{der}$. Indeed, observe that $G_\ell$ commutes with $L$ so that: 
$$V_\ell Y_{\Fbar}^{T_\ell}\cong\bigoplus_{\gr\mid\ell}V_\gr Y_{\Fbar}^{T_\ell}$$
Moreover, the description of the Zariski closure of $\rho_\gr(\Gal(\Fbar/F))$ shows that $V_\gr Y_{\Fbar}^{T_\ell}$ 
is a one-dimensional vector space over $L_\gr$ for each $\gr$ and $[L:\q]=\sum_{\gr\mid\ell}[L_\gr:\q_\ell]$.

\begin{rem}
Please see to \cite{H1} for an explanation of exterior powers of one-dimensional $p$-divisible groups by means of a multilinear Dieudonn\'e theory, as suggested 
by Richard Pink and Hadi Hedayatzadeh. Eventually, this theory has lead to a $(\invlim_n Y^{(0)}[\gq^n])\otimes_{\O_{L_\gq}}Y^{(2)}[\gq^\infty]$-valued 
alternating pairing on  $Y^{(1)}[\gq^\infty]$, please see to \cite[Construction 2.5]{H2} for more general assertions.
\end{rem}

\section{On two Moduli Spaces}
\label{converse}
Our two examples arose from $\fbar_p[[t]]$-sections in moduli spaces of Abelian varieties with a certain kind of additional structure. We round off the 
treatment with soberly introducing these moduli spaces, whose ties to the theory of Shimura varieties deserve further study, as initiated in \cite{E7}.

\subsection{First moduli space}
Recall that over an arbitrary number field, isometry classes of three-dimensional quadratic spaces with discriminant $1$ are classified by the sets of their anisotropic places, which are arbitrary finite sets of even cardinality. 
Specializing to our totally real quadratic field $K$, we fix an embedding $v:K\hookrightarrow\r$ and a quadratic space $V$ which is isotropic at $v$ and anisotropic at the other real embedding of $K$. Consider an odd 
rational prime $p$ which is inert and unramified in $K$ and such that $V$ is isotropic at the unique prime above $p$. Notice that the kernel of the diagonal $\z_{(p)}\otimes\O_K\otimes\O_K\rightarrow\z_{(p)}\otimes\O_K$ is 
generated by a unique idempotent, which we denote by $e$. Let $\O_D\supset\O_K$ be as in subsection \ref{morita1} and let $(\gY,\lambda_\gY)$ be a polarized Abelian surface with a Rosati invariant $\O_D$-action over 
$\O_K/p\O_K$, say with $\deg(\lambda_\gY)=p^2$ and $\Lie\gY[1-e]=0$ to fix ideas. Mimicking the formalism of \cite[Section 5]{kottwitz} we introduce the locally compact rings of adeles $\a:=\r\times\q\otimes\hat\z$ and 
$\a^{\infty,p}:=\q\otimes\prod_{\ell\neq p}\z_\ell$ and consider the $\O_K/p\O_K$-functor $\Mbar$ whose value on some connected $\O_K/p\O_K$-scheme $S$ is given by the set of quadruples $(Y,\lambda,\iota,\eta)$ with the 
following properties:

\begin{itemize}
\item
$Y\rightarrow S$ is an Abelian $6$-fold, equipped with an action $\iota:\O_D\rightarrow\End(Y)$, up to $\z_{(p)}$-isogeny. Moreover, we require that $\Lie Y[1-e]$ (resp. $\Lie Y[e]$) 
is a projective $\O_S$-module of rank $2$ (resp. $4$), here notice that $e$ gives rise to an idempotent in $\O_K\otimes\O_S$, so that $\Lie Y=\Lie Y[1-e]\oplus\Lie Y[e]$.
\item
$\lambda:Y\dashrightarrow Y^t$ is a $p$-integral quasipolarization (coming from a positive element in the Neron-Severi group tensorized with $\z_{(p)}$) which satisfies 
$\lambda\circ\iota(\alpha^*)=\iota(\alpha)^t\circ\lambda$ for any $\alpha\in\O_D$. We require that the induced isogeny $Y[p^\infty]\rightarrow Y^t[p^\infty]$ is of 
degree $p^2$. (N.B.: This implies that its kernel is contained in $Y[p]$, by \eqref{O2} and \cite[Preliminaries]{oort4}.)
\item
$\eta:V\otimes_K H_1^{\mathaccent19 et}(\gY_s,\a^{\infty,p})\stackrel{\cong}{\rightarrow}H_1^{\mathaccent19 et}(Y_s,\a^{\infty,p})$ is a $\pi_1^{\mathaccent19 et}(S,s)$-invariant 
$\a^{\infty,p}\otimes D$-linear isometry, where $s$ is an arbitrary geometric point on $S$ (N.B.: Both sides possess natural $\a^{\infty,p}(1)$-valued pairings).
\end{itemize}

Let $\GO(V/K)=:H$ be the $K$-group of $K$-linear isometries of $V$. Observe that every $g\in H(\a^{\infty,p}\otimes K)$ gives rise to an automorphism of $\Mbar$, as $(Y,\lambda,\iota,\eta)$ 
can be sent to $(Y,\lambda,\iota,\eta\circ g)$. One may speculate on whether or not $\Mbar$ possesses analogues in the theory of Rapoport-Zink spaces of PEL type in the sense of 
\cite[Definition 3.21]{rapoport}, but it seems hard to apply loc.cit. directly. This is due to condition (iii) of \cite[Definition 3.18]{rapoport}, which requires our associated reductive $\q_p$-group 
to possess a cocharacter with weights $0$ and $1$ in its standard representation, thus ruling out the orthogonal group in three variables. Nevertheless, it seems worthwhile to try 
to adapt \cite{rapoport} to the case at hand and similar ones, so that one can study $\Mbar$ with an applicable notion of local model in the sense of \cite[Definition 3.27]{rapoport}. 
At the face of these methods $\Mbar$ might well be formally smooth of relative dimension one over $\O_K/p\O_K$, but I conjecture that the following even nicer description is valid:

\begin{conj}
\label{O3}
Let $\GO(V/K)=H/K$ be as above and let us write $K_p\subset H(\q_p\otimes K)$ for the stabilizer of some self-dual $\z_p\otimes\O_K$-lattice in $\q_p\otimes V$ and let 
$K_\infty\subset H(\r\otimes K)$ be the product of the neutral component of some maximal compact subgroup with the center, so that $K_\infty\cong\GO(3,\r)\times\SO(2,\r)\times\GO(1,\r)$.
There exists a flat, formally smooth and universally closed $\z_{(p)}\otimes\O_K$-scheme $\M$ with $H(\a^{\infty,p}\otimes K)$-action such that 
\begin{itemize}
\item[(i)]
The underlying analytic space of $\M\times_{\z_{(p)}\otimes\O_K,v}\C$ agrees $H(\a^{\infty,p}\otimes K)$-equivariantly with the Shimura curve
\begin{equation}
\label{witz}
H(K)\backslash H(\a\otimes K)/(K_\infty\times K_p)
\end{equation}
\item[(ii)]
The special fiber $\M\times_{\z_{(p)}\otimes\O_K}\O_K/p\O_K$ agrees $H(\a^{\infty,p}\otimes K)$-equivariantly with the moduli provariety $\Mbar$.
\end{itemize}
\end{conj}

The complex structure on \eqref{witz} requires a harmless choice of an identification $h:\C^\times/\r^\times\stackrel{\cong}{\rightarrow}\SO(2,\r)\subset K_\infty$, so that $(\Res_{K/\q}H^\circ,h)$ 
becomes a Shimura datum with trivial weight homomorphism. In fact it turns out to be of Abelian type, so that the methods of \cite{kisin} clearly yield an integral canonical model $\M$ over 
$\z_{(p)}\otimes\O_K$. The conjectured characterization of its special fiber in terms of polarized Abelian $6$-folds with $\O_D$-action, i.e. property (ii), is inspired by the following naive heuristic:\\

For any perfect field $k\supset\O_K/p\O_K$ one expects the $W(k)$-points on $\M$ to be given by a "$\z_{(p)}$-motive $M$" of rank $6$ over $\Spec W(k)$ which is equipped with a $\z_{(p)}\otimes\O_K$-action, 
a perfect symmetric polarization $M\times M\rightarrow\z_{(p)}(0)$, and a $\a^{\infty,p}\otimes K$-linear isometry $\eta$ between $\a^{\infty,p}\otimes V$ and the \'etale realization $M_{\mathaccent19 et}$ 
of $M$. Moreover, one expects that its de Rham realization is a $\z/2\z$-graded strongly divisible lattice $M_{dR}=M_0\oplus M_1$ in a filtered isocrystal $\q\otimes M_{dR}$ whose filtration satisfies
\begin{eqnarray*}
&\fl^{-1}M_{dR}=M_{dR}\neq\fl^0M_{dR}\supsetneq M_{dR}[1-e]=:M_0&\\
&\fl^2M_{dR}=0\neq\fl^1M_{dR}\subsetneq M_{dR}[e]=:M_1& 
\end{eqnarray*}
so that it has three steps on $M_1$ but only one single step on $M_0$. In particular $FM_0=M_1$ and the self-dual chain
$$N:=F^{-1}M_0+M_1\supset M_1\supset F^{-1}M_0\cap M_1=N^\perp\supset pN$$
of $W(k)$-lattices in $\q\otimes M_1$ satisfies $N/M_1\cong k\cong M_1/N^\perp$. Via lemma \ref{monod6} our polarized Abelian surface $\gY$ with Rosati invariant $\O_D$-action gives rise to a 
$\z/2\z$-graded symmetric Dieudonn\'e module $L_0\oplus L_1$ fulfilling $FL_0=VL_0=L_1$ and $FL_1=VL_1=pL_0$ while its pairing restricts to a perfect pairing on $L_0$. By slight abuse of notation, we 
denote the $\O_K$-linear tensor product of the special fiber of $M$ with the "motive" of $\gY$ by $M\otimes_{\O_K}\gY$. This object is a "$\z_{(p)}$-motive" of rank $12$ over $\Spec k$ which is equipped 
with a $\z_{(p)}\otimes\O_D$-action and a natural skew-symmetric polarization, and its $\z/2\z$-graded crystalline realization is $M_0\otimes L_0\oplus M_1\otimes L_1$, which is contained in a 
Dieudonn\'e lattice namely $M_0\otimes L_0\oplus N\otimes L_1$ (N.B.: $F(M_0\otimes L_0)=M_1\otimes L_1\subset N\otimes L_1$ and $F(N\otimes L_1)\subset\frac{M_0}p\otimes pL_0$). According 
to \cite[Definition 5.1]{milne}, one could expect that a Dieudonn\'e lattice in the crystalline realization of a polarized $\z_{(p)}$-motive gives rise to an isogenous $\z_{(p)}$-isogeny class of polarized 
Abelian varieties. So let $(Y,\lambda)$ be the $\z_{(p)}$-isogeny class of polarized Abelian $6$-folds with a $\z_{(p)}\otimes\O_D$-action $\iota$ whose (using lemma \ref{monod6} again) associated 
$\z/2\z$-graded symmetric Dieudonn\'e module is the lattice $M_0\otimes L_0\oplus N\otimes L_1$. Moreover, one has $\dim_k\Lie Y[1-e]=2$ and $\dim_k\Lie Y[e]=4$ and there is a natural isogeny 
$M\otimes_{\O_K}\gY\rightarrow Y$ inducing isomorphisms $M_{\mathaccent19 et}\otimes_{\a^{\infty,p}\otimes K}H_1^{\mathaccent19 et}(\gY,\a^{\infty,p})\cong H_1^{\mathaccent19 et}(Y,\a^{\infty,p})$. 

\subsection{Second moduli space}
This subsection advertises certain moduli theoretic perspectives on the construction of $G_2$-examples in section \ref{2nd}. Fix $L\supset L^+\supset\q$ and $\gq\mid\gq^+\mid p$, as in theorem \ref{G2}, and an embedding 
$K(\O_L/\gq)\stackrel{\cong}{\rightarrow}L_\gq\stackrel{\iota_0}{\rightarrow}\C$, as in subsection \ref{proof}. Recall that we fixed an auxiliary element $-v^*=v\in\O_L\backslash\{0\}$ such that \eqref{converse2} is a polarization. 
Moreover, we pick a finitely generated $*$-invariant $\z_{(p)}\otimes\O_L$-algebra $\R\subset\z_{(p)}\otimes\O_L^{\oplus2}$ satisfying $\R[\frac1p]=L^{\oplus2}$, along with a subset $\Omega\subsetneq\z/r\z$ of 
cardinality seven and consider the $W(\O_L/\gq)$-scheme $\gM_{\R,\Omega}$ parameterizing quintuples $(Y^{(1)},\lambda^{(1)},\iota^{(1)},\iota,\eta^{(1)})$ over connected $W(\O_L/\gq)$-schemes $S$ with the following properties:

\begin{itemize}
\item
$Y^{(1)}\rightarrow S$ is an Abelian $7r$-fold up to $\z_{(p)}$-isogeny and $\lambda^{(1)}$ is a homogeneous class of polarizations on 
$Y^{(1)}$ allowing a $p$-principal representative (thus inducing a skew-symmetric self-duality $(Y^{(1)}[p^\infty])^t\cong Y^{(1)}[p^\infty]$).
\item
$\iota^{(1)}:\O_L\rightarrow\z_{(p)}\otimes\End(Y^{(1)})$ is a Rosati-invariant action such hat the $F^\sigma\circ\iota_\gq$-eigenspace of $\Lie Y$ is an invertible 
$\O_S$-module (resp. vanishes) whenever $\sigma\in\Omega$ (resp. $\sigma\notin\Omega$), where $F:K(\O_L/\gq)\rightarrow K(\O_L/\gq)$ denotes the absolute Frobenius.
\item
$\eta^{(1)}:\a^{\infty,p}\otimes\8_0\otimes L\stackrel{\cong}{\rightarrow}H_1^{\mathaccent19 et}(Y_s^{(1)},\a^{\infty,p})$ is a $\a^{\infty,p}\otimes L$-linear and $\pi_1^{\mathaccent19 et}(S,s)$-invariant
similitude, where the skew-Hermitian pairing on $L$ is defined by $\psi(x,y)=\tr_{L/\q}(vxy^*)$, the euclidean $\q$-space of purely imaginary octonions is denoted by $\8_0$ and $s$ is an arbitrary 
geometric point on $S$.
\item
$\iota:\R\rightarrow\z_{(p)}\otimes\End_L(Y^{(2)})$ is a Rosati-invariant $\O_L$-linear action, where $(Y^{(2)},\lambda^{(2)},\iota^{(2)})$ denotes the (canonically homogeneously 
$p$-principally polarized) 2nd exterior power of $(Y^{(1)},\lambda^{(1)},\iota^{(1)})$, which is formed with the help of the catalyst $Y^{(0)}$ (cf. \cite[Theorem 4.8]{habil}). Moreover, we also 
request the $\R$-linearity of the level structure
$$\a^{\infty,p}\otimes(\bigwedge^2\8_0)\otimes L\stackrel{\cong}{\rightarrow}H_1^{\mathaccent19 et}(Y_s^{(0)},\a^{\infty,p})\otimes_{\a^{\infty,p}\otimes L}H_1^{\mathaccent19 et}(Y_s^{(2)},\a^{\infty,p}),$$
which is naturally inherited from $\eta^{(1)}$.
\end{itemize}

Just as in subsection \ref{proof} we write $\Auto(\8)=:G/\q$ for the $\r$-compact $\q$-form of the simple algebraic group of type $G_2$. Let us define a $\q$-torus $Z$ as the kernel of 
$$\g_m\times\Res_{L/\q}\g_m\rightarrow\Res_{L^+/\q};(t,a)\mapsto taa^*,$$ 
and notice that $Z(\r)\cong\C^\times\times\SO(2,\r)^{r-1}$ and that $Z(\a^{\infty,p})\times G(\a^{\infty,p}\otimes L^+)$ acts on $\gM_{\R,\Omega}$, of which the generic fiber is empty. So what can 
be said about the $Z(\a^{\infty,p})\times G(\a^{\infty,p}\otimes L^+)\times\Gal(\fbar_p/\f_{p^r})$-representations $H_{\mathaccent19 et}^i(\gM_{\R,\Omega}\times_{W(\O_L/\gq)}\fbar_p,\qbar_\ell)$
or the $Z(\a^{\infty,p})\times G(\a^{\infty,p}\otimes L^+)\times\Gal(\fbar_p/\f_{p^r})$-set $\gM_{\R,\Omega}(\fbar_p)$? 
In this direction it seems reasonable to try to replace the special fiber $\gM_{\R,\Omega}\times_{W(\O_L/\gq)}\O_L/\gq$ by a smaller and more canonical variety, in the spirit of the following:

\begin{conj}
\label{ShG2} 
If $\R$ is sufficiently small, then the special fiber of $\gM_{\R,\Omega}$ contains a $Z(\a^{\infty,p})\times G(\a^{\infty,p}\otimes L^+)$-invariant closed subvariety $\emptyset\neq\gm$ 
which is smooth of dimension $6$ over $\O_L/\gq$ such that  the formal isogeny types of $Y^{(1)}[\gq^\infty]$ at any points on $\gm$ with values in a perfect field are one of 
\begin{itemize}
\item
$G_{0,1}^{2r}\oplus G_{1,r-1}^3\oplus G_{2,r-2}^2$
\item
$G_{0,1}^r\oplus G_{1,2r-1}\oplus G_{1,r-1}\oplus G_{3,2r-3}\oplus G_{2,r-2}$
\item
$G_{1,2r-1}\oplus G_{1,r-1}^3\oplus G_{3,2r-3}$
\item
$G_{1,r-1}^7$
\end{itemize}
giving rise to a dense open Newton stratum, two locally closed equidimensional Newton strata of dimension $5$ and $4$ and a closed non-equidimensional Newton stratum whose 
irreducible components have the dimension $5$ or $3$. Furthermore, $G(\a^{\infty,p}\otimes L^+)$ acts trivially on the set of irreducible components of $\gm\times_{\O_L/\gq}\fbar_p$.
\end{conj}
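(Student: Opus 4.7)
The plan is to construct $\gm$ explicitly as the Zariski closure of the sublocus where the $\gq^*$-divisible group of $Y^{(1)}$ admits the $\SL(2)_{K(\f_{p^r})}$-reduction of structure group used in the proof of Theorem \ref{G2}, and then to read off the Newton stratification from the $\SL(2)$-side.

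First I would define $\gm$ set-theoretically as the subset of $\gM_{\R,\Omega}(\fbar_p)$ parameterizing those tuples $(Y^{(1)},\lambda^{(1)},\iota^{(1)},\iota,\eta^{(1)})$ whose $\gq^*$-divisible group arises from the homomorphism \eqref{root} in the sense that its $\z/r\z$-graded isocrystal is obtained by applying the fully faithful tensor functor $M$ of Lemma \ref{oneSL} to the representation $\sd_{K(\f_{p^r})}^{\oplus2}\oplus K(\f_{p^r})^{\oplus3}\cong K(\f_{p^r})\otimes\C_0$ and then tensoring with the constant rank-one factor coming from $\Xbar[\gq^{*\infty}]$. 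Endowing this locus with a scheme structure amounts to interpreting the $\SL(2)$-reduction as a closed condition on a suitable local model of $\gM_{\R,\Omega}$; concretely, one demands that the $\R$-action together with the $\O_L$-action generate the expected centralizer inside the endomorphism algebra of the crystalline realization of $Y^{(2)}$, as dictated by the identification $\End_G^0(\bigwedge^2\C_0)\cong\q\oplus\q$ of the proof of Theorem \ref{G2}. The $Z(\a^{\infty,p})\times G(\a^{\infty,p}\otimes L^+)$-invariance of $\gm$ is automatic from this tensor-theoretic definition, since these adelic groups act on $\gM_{\R,\Omega}$ through tensor automorphisms of $\a^{\infty,p}\otimes\C_0\otimes L$ which by construction preserve the $\SL(2)$-structure.

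Next I would identify the four Newton strata on $\gm$ via the Newton stratification of rank-$2$ $\z/r\z$-graded Frobenius modules with $\SL(2)$-structure: any such module has Newton slopes coming in pairs summing to $1$, and for the Hodge polygon prescribed by $\Omega$ there are precisely four admissible slope pairs, namely $(0,1)$, $(1/r,(r-1)/r)$, $(2/r,(r-2)/r)$, and $(1/2,1/2)$. Pushing the rank-$2$ module forward through the $7$-dimensional $G_2$-representation $\C_0$ and combining with the dual information on the $\gq$-side yields the four listed formal isogeny types of $Y^{(1)}[\gq^\infty]$, with the generic Newton polygon $G_{0,1}^{2r}\oplus G_{1,r-1}^3\oplus G_{2,r-2}^2$ matching the one produced in Theorem \ref{G2} via the explicit window $\tilde M$ of Lemma \ref{twoSL}.

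Finally, the dimensions $6,5,4,3$ should be obtained by adapting the Chai--Viehmann--Hamacher formula for Newton strata of Shimura varieties of Hodge type to this exceptional setting: for the quasi-minuscule $7$-dimensional representation of $G_2$ with the cocharacter fixed by $\Omega$, the four Newton polygons form a chain of length $3$, the codimensions jump by $1$ at each step, and the top stratum has dimension $\langle\rho,\mu\rangle=6$, matching the expected dimension of the corresponding Rapoport--Zink formal scheme. Triviality of the $G(\a^{\infty,p}\otimes L^+)$-action on $\pi_0(\gm\times_{\f_{p^r}}\fbar_p)$ should follow from strong approximation for the simply connected simple group $G$ together with the compactness of $G(L^+\otimes_\q\r)$, which forces the components of $\gm$ to be indexed by the abelian quotient that is already acted upon through $Z$ alone. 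The hard part will be that $\gM_{\R,\Omega}$ is not of PEL type in the sense of \cite[Definition 3.21]{rapoport}, since $G_2$ is exceptional and the relevant cocharacter is non-minuscule, so the standard Kottwitz--Rapoport--Zink and Chai--Oort machinery does not apply off the shelf; one would have either to extend Hamacher--Viehmann-style Newton stratification dimension formulas to the present exceptional non-minuscule Shimura datum, or to execute the deformation theory by hand using the Zink window constructions of Lemma \ref{twoSL} and Lemma \ref{oneSL} to produce each of the four slope strata as a locally closed subvariety of the prescribed dimension.
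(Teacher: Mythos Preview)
The statement you are attempting to prove is a \emph{conjecture} in the paper, not a theorem; the paper does not supply a proof. After stating Conjecture~\ref{ShG2} the author offers only two sentences of heuristic motivation: the guessed dimensions of the Newton strata are said to be consistent with the purity theorem of de Jong--Oort and with known dimension formulae for affine Deligne--Lusztig varieties, and the occurrence of exactly four strata is attributed to the Newton cocharacters factoring through $Z\times\Res_{L^+/\q}G$. No construction of $\gm$, no verification of the stratum dimensions, and no argument for the component statement is given.

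Your proposal is therefore not competing with a proof in the paper but is rather a programme for establishing an open conjecture. As such it is a reasonable outline and is in the same spirit as the paper's heuristics: your identification of the four strata via slope pairs of a rank-$2$ $\z/r\z$-graded Frobenius module is precisely the ``Newton cocharacter factors through $G_2$'' observation, and your appeal to Chai--Viehmann--Hamacher--type formulae is the analogue of the paper's citation of \cite{GHKR}. You also correctly isolate the genuine obstruction, namely that the relevant datum is exceptional and non-minuscule, so none of the existing PEL or Hodge-type machinery applies directly. What remains entirely open in your sketch---and in the paper---is the actual construction of $\gm$ as a scheme, the proof that the $\SL(2)$-reduction condition is closed and cuts out something six-dimensional, and any rigorous dimension computation for the lower strata. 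Your strong-approximation argument for the component statement is plausible but would need the non-emptiness and some connectedness input that is not yet available. In short: your proposal is a sensible blueprint, but it does not constitute a proof, and neither does anything in the paper.
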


Our guesses on the dimensions of the Newton strata stand in line with their (by \cite{deJong}) known purity and with known properties of affine Deligne-Lusztig varieties 
(cf. \cite[Conjecture 1.0.1]{GHKR}). The expected occurrence of four Newton strata stems from their Newton cocharacters factoring through $Z\times\Res_{L^+/\q}G$.

\end{document}